\documentclass[12pt]{article}
\usepackage[letterpaper, total={7in, 9in}]{geometry}
\usepackage{amsmath, amsthm, amssymb, bm, mathrsfs}
\usepackage{hyperref, cleveref}

\usepackage[
  backend=biber,
  style=numeric,
  maxnames=99,
  minnames=99,
  giveninits=true,
  url=false, 
  doi=false, 
  isbn=false
]{biblatex}

\renewbibmacro{in:}{}
\DeclareFieldFormat[article]{title}{#1}
\DeclareFieldFormat{issn}{}

\renewbibmacro*{volume+number+eid}{%
  \printfield{volume}%
  \setunit*{\addperiod}%
  \printfield{number}%
}

\renewbibmacro*{issue+date}{%
  \setunit{\addcomma\addspace}%
  \printfield{year}%
  \setunit{\addcomma\addspace}%
  \newunit%
}

\usepackage{hyperref}
\theoremstyle{definition}
\newtheorem{definition}{Definition}[section]

\newtheorem{question}[definition]{Question}

\theoremstyle{plain}
\newtheorem{theorem}[definition]{Theorem}
\newtheorem{lemma}[definition]{Lemma}
\newtheorem{proposition}[definition]{Proposition}

\newtheorem{claim}[definition]{Claim}

\newtheorem{corollary}[definition]{Corollary}
\newcommand{\ra}{\rightarrow}

\usepackage{xcolor}   % in the preamble

\usepackage{amsmath, amssymb}
\usepackage{booktabs}
\usepackage{array}
\usepackage{multirow}

\renewcommand{\le}{\leqslant}
\renewcommand{\ge}{\geqslant}
\renewcommand{\leq}{\leqslant}
\renewcommand{\geq}{\geqslant}

\usepackage{todonotes}

\def \F {\mathbb{F}}

\newcommand{\wt}{\mathrm{wt}}
\newcommand{\diag}{\mathrm{diag}}
\newcommand{\supp}{\mathrm{supp}}
\usepackage[shortlabels]{enumitem}

\providecommand{\keywords}[1]
{
	\small	
	\textbf{\textit{Keywords: }} #1
}

\title{Forbidding Exactly One Hamming Distance}

\author{J\'ozsef Balogh\footnotemark[1]\thanks{Department of Mathematics, University of Illinois Urbana-Champaign, Urbana, IL, USA, and Extremal Combinatorics and Probability Group (ECOPRO), Institute for Basic Science (IBS-R029-C4), Daejeon, South Korea. Partially supported by NSF grants RTG DMS-1937241, FRG DMS-2152488, (UIUC Campus Research Board Award RB26026), Simons Collaboration grant [SFI-MPS-TSM-00013107, JB].
E-mail: \texttt{jobal@illinois.edu}.}
\and Ce Chen\footnotemark[2]
\and Bowen Li
\thanks{Department of Mathematics, University of Illinois Urbana-Champaign, Urbana, IL, USA. Partially supported by NSF grant RTG DMS-1937241 and UIUC Campus Research Board RB 24012.
E-mail: \texttt{\{cechen4, bowenl6\}@illinois.edu}.}
}

\date{}

\begin{document}

\maketitle

\begin{abstract}
% We consider the following generalization of hypercube. The vertex
% Define the $r$-distance graph $H_r(n)$ on the boolean cube $\{0,1\}^n$ that two vertices are adjacent if theie hamming distance is exactly $r$. We determined the size $s$-independent number of the $r$-distance graph $\alpha_s(H_r(n))$ up to asymptotic: we proved that $\alpha_s(H_r(n)) = \Theta(2^n/n^{r/2})$. We used BCH code. 
Addressing questions raised in recent papers, we study the $r$-distance graph $H_r(n)$ on the Boolean cube $\{0,1\}^n$, where two vertices are adjacent if their Hamming distance is exactly $r$. For fixed integers $s \ge 2$ and even $r \ge 2$, we determine the asymptotic order of the $s$-independence number $\alpha_s(H_r(n))$, showing that
\[
\alpha_s\!\left(H_r(n)\right) = \Theta\!\left(\frac{2^n}{n^{r/2}}\right).
\]
The upper bound is derived via a reduction to extremal problems for sunflower-free set systems, while the lower bound is obtained using algebraic constructions based on BCH codes and constant-weight codes.
\end{abstract}

\keywords{Hamming distance, BCH codes, Constant-weight codes, $k$-independence number.}

\textbf{\textit{Primary: }}05D05; \textbf{\textit{Secondary:}} 05D40, 94B65, 05C35.

\section{Introduction}
For a fixed integer $r\ge 1$, define the \emph{$r$-distance graph} $H_r(n)$ by
\[
V(H_r(n))=\{0,1\}^n,\qquad
E(H_r(n))=\bigl\{\{x,y\}:d_H(x,y)=r\bigr\}, 
\]
where  $d_H(x,y):=\bigl|\{i:x_i\neq y_i\}\bigr|$ denotes the \textit{Hamming distance}. 
Note that $H_1(n)$ is the usual Hamming graph.

Let $a_3(G)$ be the maximum size of a subset of $V(G)$ which spans a triangle-free graph in $G$. Note, when $r$ is odd, then $\alpha_3(H_r(n)) = 2^n$ as $H_r(n)$ is a bipartite graph. Hence from now on, we assume $r$ is always even and write $r = 2t.$

Castro-Silva, de Oliveira Filho, Slot, and Vallentin~\cite{CastroSilva2023RecursiveTheta} asked the following question in Section 7 of~\cite{CastroSilva2023RecursiveTheta}, which is related to the minima of Krawchuok and Hahn polynomials.  

\begin{question}\label{question triangle}
    What is $\alpha_3(H_r(n))$ for every $n$ and even $r$?
\end{question}

Afterwards, Mukkamala~\cite{mukkamala2025trianglefreesubsetshypercube} 
proved a lower bound of $\Omega(2^n/n^{3r/4})$ for every fixed $r$ using probabilistic method and an upper bound of $O(2^n/n)$ for all even $r$. 
In our note, finding relations to earlier results in coding theory, and on sunflowers, we asymptotically resolve Question~\ref{question triangle} for fixed $r$, and $n$ sufficiently large, by proving the bound $\Theta(2^n/n^{r/2})$. 

As pointed out in~\cite{CastroSilva2023RecursiveTheta}, Question~\ref{question triangle} is also connected to semidefinite optimization. 
Lov\'asz~\cite{lovasz1979shannon}, in his work on the Shannon capacity of the pentagon, 
introduced the theta number, which provides efficiently computable bounds on parameters 
such as the independence number. 
Gr\"otschel, Lov\'asz, and Schrijver~\cite{grotschel1986relaxations} later developed the 
\textit{theta body}, a semidefinite relaxation of the stable set polytope. 
Castro-Silva, de Oliveira Filho, Slot, and Vallentin~\cite{CastroSilva2023RecursiveTheta} 
extended this framework to hypergraphs and used it to derive upper bounds for some problems on the hypercube, 
including Question~\ref{question triangle}.

The lower bound construction in~\cite{mukkamala2025trianglefreesubsetshypercube} is actually an independent set and it was asked whether one can build a genuinely triangle-free set that is not an independent set.
We partially answer this question by exhibiting a triangle-free set that contains many edges, where it matches the upper bound up to lower order term for $r = 2$ and up to a multiplicative constant factor for all $r$. 
% However, as we showed earlier, these constructions are not far from the truth. 
In fact, we will solve the following more general problem. 

Denote $\alpha_s(G)$ (the \emph{$s$-independence number}) the maximum size of a vertex set spanning no copy of $K_s$. These functions were studied earlier in Ramsey-Tur\'an theory~\cite{RT} and in Erd\H{o}s-Rogers types of problems~\cite{erdosrogers}.
Recently, Bohman, Michelen, and Mubayi~\cite{bohman2026largestkrfreesetvertices} determined the $s$-independence number of the random graph $G_{n,1/2}$. In this note, we determine the order of magnitude of the $s$-independence number of $H_r(n)$.

\begin{question}\label{question general s}
   What is $\alpha_s(H_r(n))$, as $n \ra \infty$ for each fixed even constant $r$?
\end{question}

Observe $\alpha(H_r(n))=\alpha_2(H_r(n))$, which is an important function in coding theory. There are at least two major methods on giving upper bounds on $\alpha(H_r(n))$. The first is due to Bassalygo, Cohen, and Z\'emor~\cite{BassalygoCohenZemor2000,CohenZemor1999SubsetSums}. They determined $\alpha(H_r(n))$ up to a multiplicative constant factor, which constant depends on $r$. 

\begin{theorem}[Proposition 6 of~\cite{BassalygoCohenZemor2000}; Proposition 4.4 of~\cite{CohenZemor1999SubsetSums}]\label{thm cohenzemor}
For every fixed $r=2t$,
\[
\alpha(H_{2t}(n))=\Theta\!\left(\frac{2^n}{n^{t}}\right).
\]
\end{theorem}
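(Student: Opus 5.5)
Both bounds are proved separately, the upper bound by a sunflower-type counting argument and the lower bound by an algebraic code.

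\emph{Upper bound.} Let $A\subseteq\{0,1\}^n$ be independent in $H_{2t}(n)$, so no two points of $A$ are at distance exactly $2t$. We double count pairs $(x,S)$ with $x\in\{0,1\}^n$ and $S$ a $t$-subset of $[n]$ such that $x\oplus \mathbf 1_S\in A$. For a fixed $x$, let
\[
\mathcal F_x=\{S\in\tbinom{[n]}{t}: x\oplus\mathbf 1_S\in A\}.
\]
If $S,T\in\mathcal F_x$ are disjoint, then the two corresponding points of $A$ are at distance $|S\triangle T|=2t$, which is forbidden. So $\mathcal F_x$ is an intersecting family of $t$-sets. Erd\H{o}s--Ko--Rado then gives $|\mathcal F_x|\le\binom{n-1}{t-1}$ once $n\ge 2t$. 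Counting the pairs from the side of $A$, each $a\in A$ is hit by exactly $\binom nt$ pairs. Hence
\[
|A|\binom nt\le 2^n\binom{n-1}{t-1},
\qquad\text{so}\qquad
|A|\le \frac{t}{n}\,2^n .
\]
This bound is too weak: it only gives $O(2^n/n)$. We need to exploit more structure, since points of $A$ at distance $2j<2t$ are still allowed. I would refine the argument as follows.
\begin{itemize}
\item Partition $[n]$ into $t$ blocks $B_1,\dots,B_t$ of size about $n/t$.
\item For each $x$, consider only the transversal sets $S$ with exactly one element in each block.
\item Two such sets at symmetric difference exactly $2t$ must be disjoint, so they differ in every coordinate. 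The families $\mathcal F_x$ then become $t$-partite families in which no two members are disjoint in all blocks. A product or cross-intersecting bound should give $|\mathcal F_x|=O(\prod_i |B_i|/\min_i|B_i|)$, again only a saving of a factor $n$.
\end{itemize}
The correct route is to take \emph{sunflower-free} rather than merely intersecting families. Let $G_x$ be the set of vectors $a\oplus x$ with $a\in A$ and weight at most $2t$. The condition on $A$ forbids, among these supports, every pair of sets whose symmetric difference has size exactly $2t$. Following Frankl--F\"uredi type results on families avoiding one intersection size, the number of $2t$-sets around a point is then $O(n^{t})$ instead of $\binom{n}{2t}$. Averaging over $x$ gives
\[
|A|\binom{n}{2t}\lesssim 2^n n^{t},
\]
which is the desired $|A|=O(2^n/n^t)$. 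The main obstacle is this step: proving that a family of $2t$-subsets of $[n]$ with no two members meeting in exactly $t$ elements has size $O(n^t)$. This is the Frankl--F\"uredi theorem for a forbidden intersection size $t<k/2$... Strictly, with $k=2t$ the forbidden intersection is $t=k/2$, and I would invoke the Frankl--F\"uredi bound (or the Frankl--R\"odl / delta-system method) for this case.

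\emph{Lower bound.} I would use a BCH code. Take $m=\lceil\log_2(n+1)\rceil$ and the binary BCH code of length $n$ and designed distance $2t+1$, which has codimension at most $tm$. Its minimum distance is at least $2t+1>2t$, so it is an independent set of size at least $2^n/(n+1)^t=\Omega(2^n/n^t)$.
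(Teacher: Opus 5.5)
Your final argument is correct and is essentially the paper's route: you average translates of $A$ onto a single layer (Lemma~\ref{transfer lemma}, taking supports of weight exactly $2t$) and bound that layer by the Frankl--F\"uredi forbidden-intersection theorem. The only difference is the layer: you use $k=2t$ with forbidden intersection $t$, where Theorem~\ref{thm frankl turan number of star} gives $O(n^{\max\{t,t-1\}})=O(n^{t})$ against $\binom{n}{2t}$, while the paper uses $k=2t-1$ with forbidden intersection $t-1$, giving $O(n^{t-1})$ against $\binom{n}{2t-1}$. The BCH lower bound is also the paper's, up to a harmless slip: with $m=\lceil\log_2(n+1)\rceil$ you only have $2^{m}<2(n+1)$, so the code has at least $2^n/(2(n+1))^{t}$ words, which is still $\Omega(2^n/n^{t})$.
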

The lower bound is obtained using results on the BCH code, see, e.g.,~\cite{MacWilliamsSloane1977}, or the Appendix of our note.
The upper bound on $\alpha(H_{2t}(n))$ is obtained by using a result on a famous problem of Erd\H{o}s and S\'os~\cite{erdHos1975problems} on forbidden intersections, see, e.g.,~\cite{ellis2022intersection,Cherkashin2024, linz2026setsystemscontainingsingleton}.
They asked: for $n, k, \ell \in \mathbb{N}$, what is the maximum size $m(n, k, \ell)$ of a family $\mathcal{F} \subset\binom{[n]}{k}$, where $[n]=\{1,\ldots, n\}$, such that
$|A \cap B| \neq  \ell$ for any $A, B \in \mathcal{F}$? Note every $A,B\in\binom{[n]}{k}$ satisfy $d_H(\mathbf{1}_A,\mathbf{1}_B)=2\bigl(k-|A\cap B|\bigr)$. 

The other approach for upper bounding $\alpha(H_{2t}(n))$ is usually called as Delsarte linear programming bound, relating to the problem of Hamming associate scheme, see~\cite{Delsarte1973,BannaiIto1984,DelsarteLevenshtein1998} for more details. 

The rest of the note is organized as follows. 
In Section~\ref{sec Newresult}, we summarize our results. In Section~\ref{Preliminary}, we introduce the background and results used in our proofs. In Section~\ref{sec lower bounds}, we provide the constructions for lower bounds on $\alpha_s(H_r(n))$. In Section~\ref{sec: proof of thm 1.4 and 1.5}, we determine $a_s(H_r(n))$ asymptotically and prove a more explicit upper bound on $\alpha(H_r(n))$ than those in Theorem \ref{thm cohenzemor}. In the Appendix, we include a description of the BCH code construction.

\subsection{New Results}\label{sec Newresult}

We asymptotically solve Question \ref{question general s} for every fixed $r$.
\begin{theorem}\label{thm alpha hrn asymptotic}
     For every fixed  $s$ and $r=2t$, we have
    \[
    \alpha_s(H_{2t}(n)) = \Theta\!\left(\frac{2^n}{n^{t}}\right). 
    \]
\end{theorem}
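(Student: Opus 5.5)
The lower bound is immediate from Theorem~\ref{thm cohenzemor}. An independent set of $H_{2t}(n)$ spans no edge, hence no $K_s$ for any $s\ge 2$. Therefore
\[
\alpha_s(H_{2t}(n))\ge \alpha(H_{2t}(n))=\Omega\!\left(2^n/n^{t}\right).
\]
(For constructions containing many edges one could instead take a union of a bounded number of shifted BCH cosets. That is not needed for the order of magnitude.) So the work is the upper bound $\alpha_s(H_{2t}(n))=O(2^n/n^t)$ for fixed $s$.

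For the upper bound I would use averaging over Hamming spheres, reducing to a set-system problem. Let $S\subseteq\{0,1\}^n$ be $K_s$-free in $H_{2t}(n)$. Fix $k=\lfloor n/2\rfloor$ (any $k$ in the middle range works). Count pairs $(x,y)$ with $x\in\{0,1\}^n$, $y\in S$ and $d_H(x,y)=k$. This count equals $|S|\binom nk$. For each centre $x$, translate by $x$ and identify the points of $S$ at distance $k$ from $x$ with a family $\mathcal F_x\subseteq\binom{[n]}{k}$. Two sets $A,B\in\mathcal F_x$ satisfy $d_H=2(k-|A\cap B|)$, so $\mathcal F_x$ contains no $s$ sets that pairwise intersect in exactly $\ell:=k-t$ elements. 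In particular $\mathcal F_x$ contains no sunflower with $s$ petals and kernel of size exactly $\ell$. Averaging gives
\[
|S|\le 2^n\max_x|\mathcal F_x|\big/\tbinom nk .
\]
It therefore suffices to show that such a family has size $O\bigl(\binom nk/n^{t}\bigr)$ when $k-\ell=t$ is fixed.

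Working with $k\approx n/2$ makes the uniformity non-constant, which is awkward for Frankl--Füredi type sunflower bounds. So I would instead run the averaging on small spheres: take $k=k(t,s)$ a suitable constant. Then each $\mathcal F_x$ is a $k$-uniform family on $[n]$ with no $s$-sunflower of kernel size exactly $\ell=k-t$. By the Frankl--Füredi theorem (Deza--Erd\H{o}s--Frankl type results for forbidding $s$ sets with pairwise intersection exactly $\ell$, valid for fixed $s,k$ and $k\ge 2\ell+2$, or the analogous bound), such a family has size $O(n^{\max(\ell,k-\ell-1)})$. Choosing $k$ so that $\ell\ge k-\ell-1$, i.e.\ $k\ge 2t+1$ roughly with $\ell=k-t\ge t+1$, gives the bound $O(n^{\ell})=O(n^{k-t})$.

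The averaging then yields
\[
|S|\le 2^n\cdot O(n^{k-t})\big/\tbinom nk=O\bigl(2^n/n^{t}\bigr),
\]
since $\binom nk=\Theta(n^k)$ for fixed $k$.

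The main obstacle is the set-system step: I need the correct extremal bound $O(n^{\ell})$ for $k$-uniform families avoiding $s$ members with pairwise intersections all equal to $\ell$ (a sunflower with kernel size $\ell$), in the regime $\ell>k-\ell-1$ with $s$ fixed. In the regime $\ell\ge (k-1)/2$ the Frankl--Füredi result gives $\Theta(n^{\ell})$ for forbidding a single intersection size, but for $s$-sunflowers I would rely on the generalization by Frankl and Füredi / Deza--Erd\H{o}s--Frankl. If needed, I would prove it directly via the delta-system method (Füredi's intersection-structure lemma): pass to a subfamily of size $c\,|\mathcal F|$ that is homogeneous, and use that any kernel of size $\ell$ extending to $s$ disjoint petals is forbidden. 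This forces every $\ell$-subset to be "owned" by boundedly many members, giving $|\mathcal F|=O(n^{\ell})$. I would double-check the exponent condition on $k$ carefully, since choosing $k$ too small makes $k-\ell-1$ dominate.
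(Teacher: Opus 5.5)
Your proposal is correct and is essentially the paper's proof: averaging over spheres of constant radius $k$ is exactly the transfer Lemma~\ref{transfer lemma}, and the F\"uredi/Frankl--F\"uredi bound $O(n^{\max\{\ell,k-\ell-1\}})$ of Theorem~\ref{thm frankl turan number of star} with $\ell=k-t$ gives $O(2^n/n^t)$ as soon as $k\ge 2t-1$ (the exact form of the exponent condition you wanted to double-check). The paper uses the threshold value $k=2t-1$; your $k=2t+1$ works equally well and even keeps $\ell\ge 1$ when $t=1$. Your lower bound via $\alpha_s\ge\alpha$ and Theorem~\ref{thm cohenzemor} is as valid as the paper's appeal to Theorem~\ref{theorem lower bound s-independence}(i).
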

\noindent
Additionally, we determine $\alpha_s(H_2(n))$.

\begin{theorem}\label{thm:asymp_subsequence r=2}
For every $s$, we have
\[
\limsup_{n\to\infty} \frac{\alpha_s(H_2(n))}{2^n/n} = s-1.
\]
\end{theorem}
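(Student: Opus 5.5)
Two bounds are needed: an upper bound $\alpha_s(H_2(n)) \le (s-1+o(1))2^n/n$ for all large $n$, and a lower bound $\alpha_s(H_2(n)) \ge (s-1-o(1))2^n/n$ along a suitable subsequence of $n$.

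\emph{Upper bound.} Let $S\subseteq\{0,1\}^n$ be $K_s$-free in $H_2(n)$. Every vertex $x$ of the cube lies in the Hamming ball of radius $1$ around each of its $n$ neighbours $y$ (those with $d_H(x,y)=1$). Any two elements of the neighbourhood $N(y)=\{y\oplus e_i\}$ are at Hamming distance exactly $2$, so $S\cap N(y)$ spans a clique in $H_2(n)$. Hence $|S\cap N(y)|\le s-1$ for every $y$. Double counting pairs $(x,y)$ with $x\in S$ and $d_H(x,y)=1$ gives
\[
n|S| = \sum_{y}|S\cap N(y)| \le (s-1)2^n .
\]
Thus $\alpha_s(H_2(n))\le (s-1)2^n/n$ for every $n$, which gives $\limsup\le s-1$.

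\emph{Lower bound.} Take $n=2^m-1$ and the Hamming code $C$, which is perfect with minimum distance $3$ and $|C|=2^n/(n+1)$. The translates $C$ and $C\oplus e_i$ are pairwise disjoint. I take
\[
S = C\cup (C\oplus e_1)\cup\dots\cup (C\oplus e_{s-2}).
\]
If $s$ is large compared with $n$ this is impossible, but $s$ is fixed and $n\to\infty$. This gives $|S|=(s-1)2^n/(n+1)$.

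It remains to check that $S$ is $K_s$-free. Pigeonhole does not work directly, because a clique could take several vertices from the same translate. However, within a single translate two vertices differ by a nonzero codeword, of weight at least $3$. A distance of exactly $2$ would require a codeword of weight $2$, which does not exist. So each translate $C\oplus e_i$ is independent in $H_2(n)$. It follows that $H_2(n)[S]$ is $(s-1)$-partite, hence contains no $K_s$. This yields $\alpha_s(H_2(n))\ge (s-1)2^n/(n+1)$ along $n=2^m-1$, and combined with the upper bound, $\limsup=s-1$.

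The one point needing care is the independence of each translate; as shown, it follows because the difference of two vertices in the same translate is a nonzero codeword, hence has weight at least $3$. The upper-bound clique observation on $N(y)$ is the key idea.
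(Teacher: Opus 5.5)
Your proof is correct and follows the paper's argument. The double count of $S\cap N(y)$ over all $y$ is exactly the transfer Lemma~\ref{transfer lemma} at $k=1$ (translating by $y$ turns $N(y)$ into the first layer) combined with $m_s(n,1,0)=s-1$. Your $s-1$ disjoint translates of the Hamming code are precisely $s-1$ color classes of the fiber coloring in Proposition~\ref{prop:chi_general_ceiling} with $t=1$, $n=2^m-1$ (the cosets of the Hamming code), which is the paper's second proof of Theorem~\ref{theorem lower bound s-independence}(ii); this explicit version gives the exact bound $(s-1)2^n/(n+1)$ and avoids the random-translation Lemma~\ref{lemma boosting to s-1 parts}.
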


\noindent We also have the following bounds with more explicit coefficient in the leading term. 

\begin{theorem}\label{theorem lower bound s-independence}
(i) For every fixed $s \geq 2$ and even $r = 2t$, we have
\[
\alpha_s(H_{2t}(n))\ge \left(\frac{s-1}{t+1}+o(1)\right)\cdot \frac{2^n}{n^{t}} \qquad \text{as } n \to \infty.
\]
(ii) For every fixed $k \in [n]$ and even $r = 2t$, along the subsequence $n=2^m-k$ we have
\[
\alpha_s(H_{2t}(n))\ge \left(s-1+o(1)\right)\cdot \frac{2^n}{n^{t}}.
\]
\end{theorem}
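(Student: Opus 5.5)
We build the $K_s$-free set as a union of $s-1$ sets, each independent in $H_{2t}(n)$. Since any $s$ vertices must then contain two from the same set, and those two are non-adjacent, the union spans no $K_s$. The problem therefore reduces to finding $s-1$ pairwise \emph{disjoint} independent sets of $H_{2t}(n)$, each of size about $c\,2^n/n^t$, with $c=1/(t+1)$ in (i) and $c=1$ in (ii). The natural source is cosets of a single linear code $C\subseteq\F_2^n$ of minimum distance at least $2t+1$. Every coset $x+C$ has all pairwise distances equal to weights of nonzero codewords, hence at least $2t+1$, so each coset is independent in $H_{2t}(n)$. Distinct cosets are disjoint, so taking $s-1$ of them gives a $K_s$-free set of size $(s-1)|C|$.

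For (ii), take $n=2^m-k$. The binary BCH code of length $2^m-1$ and designed distance $2t+1$ has at most $mt$ parity checks (see the Appendix), so its size is at least $2^{2^m-1-mt}$. Shortening it in $k-1$ coordinates (keeping the codewords that vanish there and deleting those coordinates) gives a linear code of length $n$, minimum distance at least $2t+1$, and size at least $2^{n-mt}$. Since $n=2^m-k$ with $k$ fixed, we have $2^m=n+k=n(1+o(1))$, so $2^{mt}=(1+o(1))n^t$ and $|C|\ge(1-o(1))2^n/n^t$. Taking $s-1$ cosets, which exist because $2^{mt}\ge s-1$ for large $n$, gives (ii).

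For (i) with general $n$, choose $m$ with $2^{m-1}\le n<2^m$ and shorten the BCH code of length $2^m-1$ down to length $n$. This gives $|C|\ge 2^n/2^{mt}$, but $2^{mt}$ can be as large as $(2n)^t$, which loses a factor $2^t$ rather than $t+1$. To get the constant $1/(t+1)$ we average instead of using one fixed code. The plan is to choose $u$ with $2^u=\Theta(n)$ (so $u=\log_2 n+O(1)$) and use a parity-check matrix whose columns are $n$ \emph{distinct} elements $a_1,\dots,a_n$ of $\F_{2^u}$, with rows given by the odd powers $a_i,a_i^3,\dots,a_i^{2t-1}$. This matrix has $ut$ binary rows. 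Any $2t$ columns are then linearly independent, since a nontrivial dependency would yield a power-sum identity contradicting Newton's identities. Consider the syndrome map $x\mapsto Hx\in\F_2^{ut}$. Its fibers are cosets of a distance-$(2t+1)$ code, and they partition $\{0,1\}^n$ into at most $2^{ut}$ classes. Taking the $s-1$ largest fibers gives at least $(s-1)\,2^n/2^{ut}$ vertices.

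The main obstacle is making $2^{ut}$ comparable to $(t+1)n^t$ for \emph{all} $n$, not just near powers of two. The first thing to try is a constant-weight-code idea: restrict to vertices of a fixed weight class, or a window of weights, and use codes over $\F_q$ with $q$ a prime power close to $n$, via the Graham–Sloane construction $\sum_i i^j x_i \equiv c_j \pmod q$ for $j=1,\dots,t$. Each fixed choice of $(c_1,\dots,c_t)$ gives a set independent in $H_{2t}$ restricted to constant weight. Pigeonhole then yields classes of density $q^{-t}\approx n^{-t}$. Losing the factor $t+1$ when we glue across weights, or when we handle the extra parity and weight constraints, should produce exactly the constant $(s-1)/(t+1)$. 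Getting these details to give precisely $1/(t+1)$ is where I expect the real work.
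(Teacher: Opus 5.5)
Your part (ii) is correct and is essentially the paper's second argument. Cosets of the shortened BCH code are exactly the fibers of the syndrome map restricted to an $n$-dimensional subcube. There are at most $2^{mt}=(n+k)^t=(1+o(1))n^t$ of them, and the $s-1$ largest are pairwise disjoint independent sets. There is one slip. The bound $|C|\ge 2^{n-mt}$ bounds the number of cosets from \emph{above} by $2^{mt}$, so ``$2^{mt}\ge s-1$'' does not show that $s-1$ cosets exist. Fix this by passing to a subcode of dimension exactly $n-mt$, or by noting that minimum distance at least $3$ forces at least $n+1$ cosets.

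Part (i), however, is not proved.

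\begin{itemize}
\item The shortened-BCH route loses a factor $(2^m/n)^t$, which approaches $2^t$ when $n$ is just above a power of two. This suffices only for $t=1$, where $2^t=t+1$.
\item The ``averaging'' over parity-check matrices is never specified. As set up, it only guarantees fibers of size $2^n/2^{ut}$ with $2^u\ge n$, so the same loss of up to $2^t$ remains.
\item Your last paragraph names the right ingredient: Graham--Sloane classes over a prime $q=(1+o(1))n$. Getting such a $q$ needs a prime-gap input such as Baker--Harman--Pintz or the prime number theorem; Bertrand alone would again cost $2^t$. But the decisive step is left as a hope.
\end{itemize}

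The missing idea is how to glue the layers. If two weights differ by an odd number, the Hamming distance is odd. If they differ by at least $2t+1$, the distance exceeds $2t$. So partition the weights $0,\dots,n$ into the $t+1$ classes $S_i=\bigcup_{k\equiv 2i \pmod{2t+2}}\{k,k+1\}$. Any two distinct layers in the same class then differ by $1$ or by at least $2t+1$, so there are no $H_{2t}$-edges between them. Color each layer by its $q^t$ Graham--Sloane classes and give each $S_i$ its own palette. This yields $\chi(H_{2t}(n))\le (t+1)q^t=(t+1+o(1))n^t$. The union of the $s-1$ largest color classes is then a $K_s$-free set of size at least $(s-1)2^n/\bigl((t+1+o(1))n^t\bigr)$. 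Without this step your argument gives only the constant $(s-1)/2^t$ in (i), which is strictly weaker for every $t\ge 2$.
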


Theorem \ref{thm cohenzemor}  provides no explicit constant, while Theorem~\ref{theorem lower bound s-independence} does. Below we also provide some upper bounds as well. 

\begin{theorem}\label{thm prime power}
For every $i\ge 0$ and even $r = 2t$,  if $t+i$ is a prime power, then
\[
\alpha(H_{2t}(n)) \leq \left( \frac{(2t-1+i)!}{(t-1+i)!} + o(1)\right)\cdot \frac{2^n}{n^{t}}.
\]
\end{theorem}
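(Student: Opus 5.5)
The plan is to reduce the statement to the Erd\H{o}s--S\'os forbidden-intersection problem by averaging over translates of the cube. I would then invoke a known asymptotic upper bound for $m(n,k,\ell)$ in the regime where that bound is of order $\binom{n}{\ell}$. I am not certain that the required bound, with leading constant exactly $1$, is available in the literature, so the whole argument stands or falls with that citation (see the last paragraph).

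\emph{Reduction.} Let $S\subseteq\{0,1\}^n$ be an independent set of $H_{2t}(n)$. For $x\in\{0,1\}^n$ the translate $S+x$ (coordinatewise addition mod $2$) is again independent, since translation preserves Hamming distance. Fix a weight $k$ and put $\ell=k-t$. Recall that $d_H(\mathbf 1_A,\mathbf 1_B)=2(k-|A\cap B|)$ for $A,B\in\binom{[n]}{k}$. Hence the sets $A$ with $\mathbf 1_A\in S+x$ form a family in $\binom{[n]}{k}$ with no two members meeting in exactly $\ell$ elements, so
\[
\bigl|(S+x)\cap\tbinom{[n]}{k}\bigr|\le m(n,k,\ell).
\]
Summing over all $x$ counts each pair (point of $S$, point of the $k$-th layer) exactly once, which gives $|S|\binom nk\le 2^n\, m(n,k,\ell)$, that is,
\[
|S|\le \frac{2^n\,m(n,k,\ell)}{\binom nk}.
\]

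\emph{Choice of parameters.} I take $k=2t-1+i$, so that $\ell=t-1+i$ and $k\le 2\ell+1$. In this range $m(n,k,\ell)=\Theta(n^{\ell})$. If one has
\[
m(n,k,\ell)\le(1+o(1))\binom{n}{\ell},
\]
then, since $\binom{n}{\ell}/\binom nk=(1+o(1))\,\frac{k!}{\ell!}\,n^{-(k-\ell)}=(1+o(1))\,\frac{k!}{\ell!}\,n^{-t}$, we obtain
\[
|S|\le\Bigl(\frac{(2t-1+i)!}{(t-1+i)!}+o(1)\Bigr)\frac{2^n}{n^t},
\]
which is exactly the claimed bound. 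Note that the hypothesis ``$t+i$ is a prime power'' is the statement that $\ell+1$ is a prime power.

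\emph{Main obstacle.} The key step is the bound $m(n,k,\ell)\le(1+o(1))\binom n\ell$ for $\ell+1\le k\le 2\ell+1$ when $\ell+1=q$ is a prime power. I expect to take this from the Frankl--F\"uredi line of work on forbidding one intersection; the prime-power condition is what makes their method apply. Their method works modulo $q$: it deals with the intersections congruent to $k$ modulo $q$ (a Frankl--Wilson type step) and bounds the rest via a delta-system or kernel argument.

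If no citation gives the constant $1$ in this form, I would prove the bound directly. First, apply the F\"uredi delta-system lemma to pass to a large subfamily in which every member $A$ has an intersection structure closed under intersection, of rank $\le \ell$. Second, show that each member owns an $\ell$-subset (its ``kernel'') that no other member contains. Counting these private $\ell$-sets gives $(1+o(1))\binom{n}{\ell}$. The prime-power hypothesis would enter by ruling out, via a Frankl--Wilson rank argument modulo $q$, the configurations in which kernels could be shared. This is where I expect essentially all the difficulty to lie. The reduction and the final computation are routine.
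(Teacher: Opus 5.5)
Your averaging reduction, the choice $k=2t-1+i$, $\ell=t-1+i$, and the closing computation $\binom{n}{\ell}/\binom{n}{k}=(1+o(1))\frac{k!}{\ell!}n^{-t}$ are the same as in the paper's proof. So everything rests on the inequality $m(n,k,\ell)\le(1+o(1))\binom{n}{\ell}$, and your proposal does not establish it. The citation is a guess. The delta-system fallback also cannot give leading constant $1$. F\"uredi's structure theorem only yields a homogeneous subfamily $\mathcal{F}^*$ with $|\mathcal{F}^*|\ge c|\mathcal{F}|$, for a small constant $c>0$ depending on $k$. Private $\ell$-sets inside $\mathcal{F}^*$ therefore give only $|\mathcal{F}|\le c^{-1}\binom{n}{\ell}$, which is $O(n^\ell)$ with a poor constant. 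You also never say how a mod-$q$ rank argument would rule out shared kernels.

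The paper instead appeals directly to the Frankl--Wilson theorem: if $q$ is a prime power and no two members of $\mathcal{F}\subseteq\binom{[n]}{k}$ meet in a number of elements congruent to $k$ modulo $q$, then $|\mathcal{F}|\le\binom{n}{q-1}$. For $i=0$ this is the idea you are missing. With $q=t$ and $k=2q-1$, the only integer in $[0,k-1]$ congruent to $k$ modulo $q$ is $q-1=\ell$. So a family with no intersection of size $\ell$ satisfies the hypothesis verbatim, and $m(n,2t-1,t-1)\le\binom{n}{t-1}$, with no error term and no delta-systems.

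For $i\ge 1$ your hesitation is justified, and the paper's one-line citation with $q=t+i$ does not go through as written. Here $k=2t-1+i\equiv t-1\pmod{t+i}$, so the Frankl--Wilson hypothesis forbids intersections of size $t-1$, not $t-1+i$. Already for $t=i=1$, a matching in $\binom{[n]}{2}$ avoids intersection $1$, yet all its pairwise intersections equal $0\equiv k\pmod 2$.

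When $t$ is itself a prime power this is harmless. The constant $\frac{(2t-1+i)!}{(t-1+i)!}=\prod_{j=t+i}^{2t-1+i}j$ increases with $i$, so the $i=0$ bound implies the others. When $t$ is not a prime power, one needs precisely your inequality $m(n,2t-1+i,t-1+i)\le(1+o(1))\binom{n}{t-1+i}$ in the range $k<2\ell+1$. The Frankl--Wilson theorem does not give this, and neither your sketch nor the paper's argument proves it.

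In summary, replace your last paragraph by the direct Frankl--Wilson application. That settles $i=0$, and hence every case in which $t$ is a prime power. For the remaining cases the key step is still missing in your proposal, and, as far as the given argument goes, in the paper as well.
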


\section{Preliminaries}\label{Preliminary}

\begin{definition}
A family $A_1, \ldots, A_s$ of distinct sets is a \emph{sunflower} if there exists a set $C$, called the \emph{kernel}, such that $C \subseteq A_i$ for every $i$ and the \emph{petals} $A_i \setminus C$ are pairwise disjoint. We denote by $\mathcal{S}_\ell^{(k)}(s)$ the $k$-uniform sunflower with $s$ petals and kernel of size $\ell$. Define the Tur\'an number $\operatorname{ex}(n, \mathcal{H})$ of an $r$-graph $\mathcal{H}$ the maximum number of  edges in an $r$-graph on $n$ vertices, which does not contain a copy of $\mathcal{H}$ as a subhypergraph. 
\end{definition}

The following theorem was proved by F\"uredi~\cite{furedi1983finite}, 
reducing the general case to the two-petal setting, for which bounds 
were obtained by Frankl and F\"uredi~\cite{FranklFuredi1985}. 
The explicit formulation of the theorem was made later explicit 
by Frankl and F\"uredi~\cite{frankl1987exact}.

% The following theorem is proved by F\"uredi~\cite{furedi1983finite} by reducing the bound for $k$ petals to 2 petals and the bound for 2 petals are given by Frankl and F\"uredi\cite{FranklFuredi1985}. However, it is first explicitly contributed to this problem by Frankl and F\"uredi\cite{frankl1987exact}. 

\begin{theorem}[F\"uredi~\cite{furedi1983finite}; Frankl--F\"uredi~\cite{FranklFuredi1985}]
\label{thm frankl turan number of star}
For every fixed $k$, $\ell$, and $s$ with $1 \le \ell \le k-1$,
\[
\operatorname{ex}(n, \mathcal{S}_\ell^{(k)}(s)) = O\bigl(n^{\max\{\ell,\, k-\ell-1\}}\bigr).
\]
\end{theorem}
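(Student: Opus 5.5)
The plan is to use Füredi's delta-system method: first pass to a large, highly structured subfamily, and then bound that subfamily by a counting argument in which each edge is determined by a small "private" subset.

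\textbf{Step 1 (structure).} Let $\mathcal F\subseteq\binom{[n]}{k}$ contain no copy of $\mathcal S_\ell^{(k)}(s)$. I would apply Füredi's intersection structure theorem~\cite{furedi1983finite}, taking it as a black box. It gives a constant $c=c(k,s)>0$ and a subfamily $\mathcal F^*\subseteq\mathcal F$ with $|\mathcal F^*|\ge c|\mathcal F|$ and the following properties.
\begin{itemize}
\item[(a)] $\mathcal F^*$ is $k$-partite with parts $X_1,\dots,X_k$, so every edge can be identified with $[k]$ through a projection $\pi$.
\item[(b)] Every $F\in\mathcal F^*$ has the same intersection pattern $\mathcal J=\{\pi(F\cap F'):F'\in\mathcal F^*\setminus\{F\}\}\subseteq 2^{[k]}\setminus\{[k]\}$.
\item[(c)] $\mathcal J$ is closed under intersection.
\item[(d)] For every $F\in\mathcal F^*$ and $J\in\mathcal J$, the set $\pi^{-1}(J)\cap F$ is the kernel of a sunflower in $\mathcal F^*$ with at least $s$ petals.
\end{itemize}
By (d), a member of $\mathcal J$ of size exactly $\ell$ would yield a copy of $\mathcal S_\ell^{(k)}(s)$. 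Hence $\mathcal J$ contains no $\ell$-set.

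\textbf{Step 2 (counting).} Let $m=\max\{\ell,k-\ell-1\}$. It suffices to find a set $T\subseteq[k]$ with $|T|\le m$ that is contained in no member of $\mathcal J$. Given such a $T$, for each $F\in\mathcal F^*$ the set $\pi^{-1}(T)\cap F$ lies in no other edge of $\mathcal F^*$, because any intersection $F\cap F'$ projects into $\mathcal J$. So the map $F\mapsto \pi^{-1}(T)\cap F$ is injective. This gives $|\mathcal F^*|\le\binom{n}{|T|}=O(n^m)$, and therefore $|\mathcal F|\le c^{-1}|\mathcal F^*|=O(n^m)$.

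\textbf{Step 3 (the combinatorial lemma; main obstacle).} The lemma to prove is: if $\mathcal J\subseteq 2^{[k]}\setminus\{[k]\}$ is intersection-closed and has no $\ell$-set, then some $T$ with $|T|\le\max\{\ell,k-\ell-1\}$ is contained in no member of $\mathcal J$.
\begin{itemize}
\item If every member of $\mathcal J$ has size less than $\ell$, any $\ell$-set works as $T$.
\item Otherwise, choose $A\in\mathcal J$ of minimum size subject to $|A|>\ell$. By intersection-closedness and the absence of $\ell$-sets, every $B\in\mathcal J$ satisfies either $A\subseteq B$ or $|A\cap B|<\ell$. The natural first try is $T=([k]\setminus A)\cup\{a\}$ for some $a\in A$, which has size at most $k-\ell$. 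Suppose a member $B\supseteq T$ exists. Then $B\neq[k]$ forces $|A\cap B|<\ell$, and one must exploit this. The fix I would try is to replace $\{a\}$ by a carefully chosen subset of $A$, or to choose $A$ maximal instead of minimal, so as to push the size down to $k-\ell-1$.
\end{itemize}
If this direct argument becomes messy, I would fall back on the explicit lemma in Frankl--Füredi~\cite{FranklFuredi1985,frankl1987exact}, which treats exactly such intersection-closed families avoiding a given size. The reduction in Steps 1 and 2 is standard, so this extremal lemma on $\mathcal J$ is where the real difficulty lies.
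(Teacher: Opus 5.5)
The paper gives no proof of this statement. It quotes it from F\"uredi and Frankl--F\"uredi, and your Steps~1--2 are the standard delta-system reduction behind those results.

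\textbf{Steps 1--2 are fine.} Once you have a set $T\subseteq[k]$ lying in no member of $\mathcal J$, the map $F\mapsto\pi^{-1}(T)\cap F$ is injective, so $|\mathcal F|=O(n^{|T|})$.

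\textbf{The gap is Step~3, and it is genuine.} The lemma there says that an intersection-closed $\mathcal J\subseteq 2^{[k]}\setminus\{[k]\}$ with no $\ell$-set admits a set of size at most $\max\{\ell,k-\ell-1\}$ contained in no member. The exponent comes entirely from this lemma, and you do not prove it.
\begin{itemize}
\item[(i)] Your candidate $T=([k]\setminus A)\cup\{a\}$ has size $k-|A|+1$. This equals $k-\ell$ when $|A|=\ell+1$, which is already too large once $k\ge 2\ell+1$.
\item[(ii)] For $\ell\ge2$ this $T$ need not even avoid $\mathcal J$. Take $\ell=2$, $k=6$ and $\mathcal J=\{\emptyset\}\cup\{\{x\}:x\in[6]\}\cup\{\{1,2,3\},\{1,4,5,6\}\}$. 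With $A=\{1,2,3\}$ and $a=1$ you get $T=\{1,4,5,6\}\in\mathcal J$.
\item[(iii)] The repairs you suggest (``a carefully chosen subset of $A$'', ``choose $A$ maximal'') are not carried out.
\item[(iv)] Falling back on the Frankl--F\"uredi lemma simply cites the result, which is exactly what the paper does.
\end{itemize}

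\textbf{What you still need is an actual proof of that lemma.} Part of it is routine by induction on $k$.
\begin{itemize}
\item[(a)] \emph{Restriction, for $k\ge 2\ell+2$.} Let $H$ be an inclusion-maximal member of $\mathcal J$ and let $y\notin H$. Suppose $T_H\subseteq H$ lies in no member of $\mathcal J$ properly contained in $H$. Then $T_H\cup\{y\}$ lies in no member of $\mathcal J$: if $B\supseteq T_H\cup\{y\}$, then $B\cap H\in\mathcal J$ contains $T_H$, so $B\cap H=H$ and $B\supsetneq H$. To get $T_H$, apply the induction hypothesis on ground set $H$ to $\{B\in\mathcal J: B\subsetneq H\}$, or take $T_H=H$ if $|H|<\ell$. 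This gives $|T_H\cup\{y\}|\le\max\{\ell+1,k-\ell-1\}$, which is enough when $k\ge 2\ell+2$.
\item[(b)] \emph{Contraction, for $k\le 2\ell$.} Fix a point $x$ and pass to $\{B\setminus\{x\}:x\in B\in\mathcal J\}$, which has no $(\ell-1)$-set. Take the set given by induction and add $x$ back. This gives $\max\{\ell,k-\ell\}$, which is enough when $k\le 2\ell$.
\end{itemize}
Both reductions give only $\ell+1$ when $k=2\ell+1$. In that case you must show directly that some $\ell$-subset of $[2\ell+1]$ lies in no member of $\mathcal J$. That is the real combinatorial content of the theorem, and your proposal gives no argument for it.
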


More recently, Brad\'a\v{c}, Bu\v{c}i\'c, and Sudakov~\cite{bradavc2023turan} proved bounds that also capture the dependence on $k$, when it grows with $n$.
For our purposes, however, it suffices to know the asymptotic behavior in $n$, when all other parameters are fixed.
\medskip

Define $m_s(n,k,\ell)$ to be the maximum size of a $k$-uniform family $\mathcal{F} \subseteq \binom{[n]}{k}$ 
such that there do not exist $s$ distinct sets $F_1,\dots,F_s \in \mathcal{F}$ with $
|F_i \cap F_j| = \ell$  for every $i \ne j$.
The classical parameter introduced by Erd\H{o}s~\cite{erdHos1975problems} corresponds to the case $s=2$, namely $
m(n,k,\ell) := m_2(n,k,\ell)$,
which denotes the maximum size of a $k$-uniform family with no two sets intersecting in exactly $\ell$ elements. Note that the problem of determining $m_s(n,k,0)$ is equivalent to the  Erd\H{o}s Matching Conjecture~\cite{erdos1965problem}. The following observation is immediate.

\begin{lemma} \label{lemma trivial ms to turan}
    $m_s(n,k,\ell) \le \operatorname{ex}(n, \mathcal{S}_\ell^{(k)}(s)).$
\end{lemma}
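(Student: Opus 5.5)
The plan is to show that every family which is $s$-sunflower-free with kernel size $\ell$ also has no $s$ members with pairwise intersections of size exactly $\ell$. Equivalently, any $s$ such members already form a copy of $\mathcal{S}_\ell^{(k)}(s)$. The lemma then follows because every family counted by $m_s(n,k,\ell)$ is a $k$-graph avoiding $\mathcal{S}_\ell^{(k)}(s)$, so its size is at most $\operatorname{ex}(n,\mathcal{S}_\ell^{(k)}(s))$.

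The key step is therefore the contrapositive. Suppose $\mathcal{F}\subseteq\binom{[n]}{k}$ contains a copy of $\mathcal{S}_\ell^{(k)}(s)$, given by distinct sets $F_1,\dots,F_s\in\mathcal{F}$ with kernel $C$, $|C|=\ell$, such that $C\subseteq F_i$ for all $i$ and the petals $F_i\setminus C$ are pairwise disjoint. For $i\neq j$ we have
\[
F_i\cap F_j = C \cup \bigl((F_i\setminus C)\cap (F_j\setminus C)\bigr) = C,
\]
so $|F_i\cap F_j|=\ell$ for all $i\neq j$. Hence $\mathcal{F}$ is not admissible for $m_s(n,k,\ell)$.

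Consequently, a family achieving $m_s(n,k,\ell)$ is $\mathcal{S}_\ell^{(k)}(s)$-free as a $k$-uniform hypergraph on $[n]$, which gives the inequality. I do not expect any real obstacle. The only point to check is that the hypergraph notion of containing a copy of the sunflower (as a subhypergraph on $s$ distinct edges) matches the definition above with distinct sets, and it does. The converse inclusion fails, since pairwise intersections of size $\ell$ need not share a common kernel, but it is not needed.
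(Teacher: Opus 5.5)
Your key step is correct and is exactly the observation the paper calls immediate: distinct $F_1,\dots,F_s$ forming a copy of $\mathcal{S}_\ell^{(k)}(s)$ satisfy $F_i\cap F_j=C$, so $|F_i\cap F_j|=\ell$ for all $i\neq j$. Hence every family admissible for $m_s(n,k,\ell)$ is $\mathcal{S}_\ell^{(k)}(s)$-free. The paper reuses the same fact later when it notes that a sunflower with $s$ petals is a special type of $K_s$.

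One correction: your opening paragraph states the plan in the wrong direction. You write that every $\mathcal{S}_\ell^{(k)}(s)$-free family has no $s$ members pairwise meeting in exactly $\ell$ elements, and ``equivalently'' that any such $s$ members form a sunflower. That is the converse of what you need, and it is false. For $k=2$, $\ell=1$, $s=3$, the sets $\{1,2\},\{1,3\},\{2,3\}$ pairwise meet in one element but have no common kernel, as your own last sentence acknowledges.

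The implication you actually prove, and the only one needed, is this: a family with no $s$ sets of pairwise intersection size $\ell$ contains no copy of $\mathcal{S}_\ell^{(k)}(s)$. Rewrite the first paragraph to state that direction.
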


\noindent We will also need the following result, proved by Frankl and Wilson~\cite{FranklWilson1981IntersectionTheorems}. 
\begin{theorem}[Frankl--Wilson~\cite{FranklWilson1981IntersectionTheorems}]
\label{thm frankl wilson q-1}
Let $q$ be a prime power and $\mathcal{F} \subseteq \binom{[n]}{k}$. 
If $|F \cap F'| \not\equiv k \pmod q$ for all distinct $F, F' \in \mathcal{F}$, 
then $|\mathcal{F}| \le \binom{n}{q-1}$.
\end{theorem}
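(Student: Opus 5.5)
The plan is the standard polynomial (linear-algebra) method, run over $\F_p$ where $q=p^a$, with Lucas' theorem handling the prime-power modulus. For $F\in\mathcal F$ let $v_F\in\{0,1\}^n$ be its characteristic vector, and define the polynomial in $x=(x_1,\dots,x_n)$
\[
f_F(x)=\binom{x\cdot v_F-k-1}{q-1}=\frac{(x\cdot v_F-k-1)(x\cdot v_F-k-2)\cdots(x\cdot v_F-k-q+1)}{(q-1)!}.
\]
Although $(q-1)!$ is divisible by $p$, $f_F$ is integer-valued, so I would evaluate it in $\F_p$ on $0/1$ vectors. Lucas' theorem gives the key fact: $\binom{m-1}{q-1}\not\equiv 0 \pmod p$ if and only if $m\equiv 0\pmod q$. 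Indeed, $\binom{m-1}{q-1}$ is nonzero mod $p$ exactly when the lowest $a$ base-$p$ digits of $m-1$ all equal $p-1$, i.e.\ when $m-1\equiv -1\pmod q$. Setting $m=|F\cap G|-k$ gives $f_F(v_F)\neq 0$ and $f_F(v_G)=0$ for $G\ne F$, by the hypothesis $|F\cap G|\not\equiv k\pmod q$. The usual triangular argument then shows that the functions $f_F$, restricted to $\{0,1\}^n$, are linearly independent over $\F_p$.

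Next I would bound the ambient dimension. After multilinearizing (replacing $x_i^2$ by $x_i$), each $f_F$ has degree at most $q-1$. This step needs care, because $f_F$ is written with the rational coefficient $1/(q-1)!$. I would first expand $\binom{\sum_{i\in F}x_i-k-1}{q-1}$ via Vandermonde into integer combinations of products $\prod_j\binom{x_{i_j}}{e_j}$, and then restrict to $0/1$ inputs. On such inputs, $\binom{\sum_{i\in F}x_i}{j}=\sum_{I\subseteq F,\,|I|=j}x_I$ with $x_I=\prod_{i\in I}x_i$. So $f_F$ is an integer combination of monomials $x_I$ with $|I|\le q-1$, and it makes sense mod $p$. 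This alone gives $|\mathcal F|\le\sum_{i\le q-1}\binom{n}{i}$.

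To sharpen this to $\binom{n}{q-1}$, I would use the Alon--Babai--Suzuki uniformity trick. Add the auxiliary functions $g_I(x)=x_I\bigl(\sum_i x_i-k\bigr)$ for all $|I|\le q-2$. I would then show that $\{f_F\}\cup\{g_I\}$ is linearly independent as functions on $\{0,1\}^n$. Each $g_I$ vanishes on every $v_F$, since $|F|=k$; hence in a dependence relation, evaluating at the $v_F$ kills all coefficients of the $f_F$. It remains to show that the $g_I$ are themselves independent. For this, order the sets $I$ by size and evaluate at $v_J$ for suitable $J$ with $|J|\ne k$ (modulo $p$). Together with the dimension count $\sum_{i\le q-1}\binom ni$ for multilinear polynomials of degree $\le q-1$, this yields $|\mathcal F|\le\sum_{i\le q-1}\binom ni-\sum_{i\le q-2}\binom ni=\binom{n}{q-1}$.

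The main obstacle is this last independence of the $g_I$. Evaluating at $v_I$ gives $g_I(v_I)=|I|-k$, which may vanish mod $p$, so the triangular argument has to be arranged more carefully. I would first try the original ABS route: show by induction on $|I|$ that a vanishing combination $\sum c_I g_I$ forces $c_I=0$, evaluating at vectors $v_J$ with $J\supseteq I$ chosen so that $|J|-k\not\equiv 0\pmod p$; this is possible when $q-1\le k$ and $n$ is large enough. The degenerate range $k<q-1$ I would treat separately, using the trivial bound $|\mathcal F|\le\binom nk$ and checking $\binom nk\le\binom n{q-1}$ there (or noting that the hypothesis is then vacuous apart from trivial cases).
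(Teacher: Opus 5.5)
\textbf{There is a genuine gap in the sharpening step.} Your first step is sound. By Lucas' theorem the integer matrix $A_{F,G}=f_F(v_G)=\binom{|F\cap G|-k-1}{q-1}$ is, mod $p$, diagonal with unit diagonal. The Vandermonde expansion then correctly gives $|\mathcal F|\le\sum_{i\le q-1}\binom ni$. The problem comes next. When $q$ is a proper prime power, the functions $g_I=x_I(\sum_i x_i-k)$, $|I|\le q-2$, are in general \emph{linearly dependent} over $\mathbb{F}_p$ on $\{0,1\}^n$, for every $n$. So no choice of evaluation points $v_J$ can make your induction work. Take $q=4$, $p=2$, and write $w=\sum_i x_i$. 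On $\{0,1\}^n$:
\begin{itemize}
\item $g_{\{1\}}+\sum_{j\neq 1}g_{\{1,j\}}=x_1(w-k)(1+w-x_1)=x_1w(w-k)$, which is identically $0$ mod $2$ when $k$ is odd (e.g.\ the classical case $k=2q-1=7$);
\item $\sum_{j\neq 1}g_{\{1,j\}}=x_1(w-1)(w-k)$, which is identically $0$ mod $2$ when $k$ is even.
\end{itemize}
In the odd case, your evaluation at $v_J$ with $J\ni 1$ and $|J|$ even just returns $1+(|J|-1)\equiv 0$, so no contradiction arises. Hence the subtraction $\sum_{i\le q-1}\binom ni-\sum_{i\le q-2}\binom ni$ is not justified over $\mathbb{F}_p$. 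The ABS independence does hold for $q=p$ prime and $n$ large, via a top-down argument: the inclusion matrices of $j$-sets versus $(j+1)$-sets have full rank mod $p$ for $j\le p-2$. But that is exactly the case in which Lucas' theorem is not needed.

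\textbf{The fix.} The missing idea is the standard Frankl--Wilson one: use $p$ only to certify nonsingularity, and count dimensions over $\mathbb{Q}$.
\begin{itemize}
\item $\det A$ is an integer that is nonzero mod $p$, so $A$ is nonsingular over $\mathbb{Q}$. Hence the $f_F$ are linearly independent over $\mathbb{Q}$ as functions on $\{0,1\}^n$, and they are rational multilinear polynomials of degree at most $q-1$.
\item Over $\mathbb{Q}$ your ABS argument goes through verbatim. For a minimal $I$ with $c_I\neq 0$, evaluating at $v_I$ gives $c_I(|I|-k)=0$, and $|I|-k\neq 0$ in $\mathbb{Q}$ since $|I|\le q-2<k$ once $k\ge q-1$.
\item Equivalently, on the layer $\{|x|=k\}$ one has $\binom{k-|I|}{q-1-|I|}x_I=\sum_{J\supseteq I,\,|J|=q-1}x_J$, with a nonzero rational coefficient.
\end{itemize}
This yields $|\mathcal F|\le\binom n{q-1}$ with no largeness assumption on $n$. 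Reduction mod $p$ cannot increase rank, so the $\mathbb{F}_p$-dimension bound you wanted is true, but it does not come from independence of the $g_I$.

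\textbf{The range $k<q-1$.} Your treatment of this case cannot be completed. The hypothesis is then vacuous, and for example $n=2$, $k=1$, $q=3$, $\mathcal F=\{\{1\},\{2\}\}$ gives $|\mathcal F|=2>\binom 22$. So the statement as written needs $k\ge q-1$. This holds in the paper's application ($k=2t-1+i$, $q=t+i$). The paper itself only cites the result rather than proving it.
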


\medskip
% \subsection{Bassalygo Transfer Lemma}

% \paragraph{Bassalygo Transfer Lemma}
Let $L_k = \{x \in \{0,1\}^n : |x| = k\}$ denote the $k$-th layer of the hypercube. 
By a slight abuse of notation, we also write $L_k =L_k^r$ for the subgraph of $H_r(n)$ induced on this set.
The following transfer argument is usually attributed to Bassalygo and appears in several papers (e.g.,~\cite{Bassalygo1965NewUpperBounds,BassalygoCohenZemor2000}). For completeness, we include its proof.

\begin{lemma}\label{transfer lemma}
    For every $k, s \in [n]$ we have
\[
\alpha_s(H_{2t}(n)) \le \frac{2^n}{\binom{n}{k}} \cdot \alpha_s(L_k)
= \frac{2^n}{\binom{n}{k}} \cdot m_s(n,k,k-t).
\]
\end{lemma}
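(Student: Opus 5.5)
The plan is the classical averaging argument over translates of the layer $L_k$, followed by the standard translation between Hamming distance and intersection size.

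\textbf{Averaging over translates.} Let $S\subseteq\{0,1\}^n$ be a set spanning no copy of $K_s$ in $H_{2t}(n)$, with $|S|=\alpha_s(H_{2t}(n))$. For each $z\in\{0,1\}^n$, consider the translate $L_k+z=\{x+z: x\in L_k\}$, with addition over $\mathbb{F}_2^n$. Double counting the pairs $(z,x)$ with $x\in S\cap(L_k+z)$ gives
\[
\sum_{z\in\{0,1\}^n}|S\cap(L_k+z)| = |S|\cdot\binom{n}{k},
\]
because for each fixed $x$ there are exactly $\binom{n}{k}$ choices of $z$ with $x+z\in L_k$. Hence some $z$ satisfies $|S\cap(L_k+z)|\ge |S|\binom{n}{k}/2^n$.

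\textbf{Transferring to the layer.} The map $x\mapsto x+z$ preserves Hamming distance, so it is an automorphism of $H_{2t}(n)$. It therefore sends $S\cap(L_k+z)$ to a subset of $L_k$ that still spans no $K_s$. This gives
\[
|S|\binom{n}{k}/2^n\le\alpha_s(L_k),
\]
which rearranges to the claimed inequality.

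\textbf{Identifying $\alpha_s(L_k)$.} It remains to show $\alpha_s(L_k)=m_s(n,k,k-t)$. Identify $L_k$ with $\binom{[n]}{k}$ via $A\mapsto\mathbf{1}_A$. As noted in the introduction,
\[
d_H(\mathbf{1}_A,\mathbf{1}_B)=2\bigl(k-|A\cap B|\bigr),
\]
so $d_H=2t$ holds exactly when $|A\cap B|=k-t$. Consequently, a $K_s$ in $L_k$ corresponds to $s$ distinct sets with all pairwise intersections equal to $k-t$, and $K_s$-free subsets of $L_k$ are exactly the families counted by $m_s(n,k,k-t)$.

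None of these steps is a real obstacle. The only point needing care is that translation by $z$ maps $L_k$ onto $L_k+z$ and preserves adjacency, which holds because $d_H(x+z,y+z)=d_H(x,y)$.
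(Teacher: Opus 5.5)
Your proposal is correct and follows essentially the same averaging-over-translates argument as the paper: your set $S\cap(L_k+z)$ is the translate by $z$ of the paper's $(\mathcal{F}+z)\cap L_k$, and the counting identity, choice of a good translate, and use of translation invariance of Hamming distance all match. Your explicit derivation of $\alpha_s(L_k)=m_s(n,k,k-t)$ from $d_H(\mathbf{1}_A,\mathbf{1}_B)=2(k-|A\cap B|)$ spells out what the paper simply asserts holds by definition.
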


\begin{proof}
The equality holds by definition, since $\alpha_s(L_k) = m_s(n, k, k-t)$. It remains to prove the inequality. 

Let $\mathcal{F} \subseteq\{0,1\}^n$ be a $K_s$-free set in $H_{2t}(n)$ with $|\mathcal{F}|=\alpha_s\!\left(H_{2t}(n)\right)$. For each $v \in\{0,1\}^n$, define $
\mathcal{F}_k(v)=(\mathcal{F}+v) \cap\binom{[n]}{k},
$ where addition is in $\mathbb{F}_2^n$.
We claim 
\begin{equation}\label{eq: lem 2.5-1}
    \sum_{v \in\{0,1\}^n}\left|\mathcal{F}_k(v)\right|=|\mathcal{F}|\cdot\binom{n}{k},
\end{equation}
since every pair $(x, y) \in \mathcal{F} \times\binom{[n]}{k}$ contributes exactly once to the left handside of~\eqref{eq: lem 2.5-1}, by noticing that for every $(x,y)$ there is a unique $v$ such that $x+v=y$. 

Therefore, for some $v^* \in\{0,1\}^n$,
\begin{equation}\label{eq 11}
    \left|\mathcal{F}_k\!\left(v^*\right)\right| \geq|\mathcal{F}| \cdot \frac{\binom{n}{k}}{2^n} .
\end{equation}
Set $\mathcal{F}^{\prime}=\mathcal{F}_k\!\left(v^*\right) \subseteq\binom{[n]}{k}$. Since translation by $v^*$ preserves the Hamming distance, $\mathcal{F}+v^*$ is $K_s$-free in $H_{2t}(n)$. Hence, its subset $\mathcal{F}^{\prime}$ is $K_s$-free in $L_k$, thus $\left|\mathcal{F}^{\prime}\right| \leq \alpha_s(L_k)$. Combining this with (\ref{eq 11}) gives
\[
\alpha_s\!\left(H_{2t}(n)\right)=|\mathcal{F}| \leq \frac{2^n}{\binom{n}{k}}\left|\mathcal{F}^{\prime}\right| \leq \frac{2^n}{\binom{n}{k}} \alpha_s(L_k).\qedhere
\]
\end{proof}

We will also use a result of Graham and Sloane on constant-weight codes. 

\begin{theorem}[Graham--Sloane~\cite{graham2003lower}] \label{thm constant weight code}
Let $n$ be a prime power and $r = 2t$ be a fixed even integer. Then, for every $k\in[n]$ we have
\[
\chi(L_k) \le n^{t}.
\]
\end{theorem}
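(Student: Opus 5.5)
The plan is to give an explicit proper colouring of $L_k$ with at most $n^{t}$ colours, following the Graham--Sloane idea of colouring a $k$-set by the top coefficients of the polynomial whose roots are its elements. Since $n$ is a prime power, I identify $[n]$ with the field $\mathbb{F}_q$, $q=n$, via any bijection. Vertices of $L_k$ are then $k$-subsets $A\subseteq \mathbb{F}_q$. For such $A$ let
\[
P_A(z)=\prod_{a\in A}(z-a)\in \mathbb{F}_q[z],
\]
a monic polynomial of degree $k$. I define the colour $c(A)=\bigl(e_1(A),\dots,e_t(A)\bigr)\in\mathbb{F}_q^{t}$, where $e_j(A)$ is the $j$-th elementary symmetric polynomial of the elements of $A$. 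Equivalently, up to sign, $c(A)$ records the coefficients of $z^{k-1},\dots,z^{k-t}$ in $P_A$. This uses $q^t=n^t$ colours. I use elementary symmetric functions rather than power sums on purpose: Newton's identities fail in characteristic $p$, so power sums might not distinguish sets.

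It remains to show that $c$ is proper. Let $A,B$ be adjacent in $L_k$, so $d_H(\mathbf{1}_A,\mathbf{1}_B)=2t$. Since $|A|=|B|=k$, this means $A=C\cup X$ and $B=C\cup Y$, where $|C|=k-t$, $|X|=|Y|=t$, and $X\neq Y$ ($X,Y$ disjoint and nonempty since $t\ge1$). Suppose for contradiction that $c(A)=c(B)$. Then the monic degree-$k$ polynomials $P_A$ and $P_B$ agree in the coefficients of $z^k,z^{k-1},\dots,z^{k-t}$, so $\deg(P_A-P_B)\le k-t-1$. On the other hand,
\[
P_A-P_B=P_C\,(P_X-P_Y).
\]
Here $P_X$ and $P_Y$ are monic of degree $t$, so $P_X-P_Y$ has degree at most $t-1$. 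It is nonzero because $X\neq Y$ and a monic polynomial that splits with distinct roots determines its root set. Hence $P_C(P_X-P_Y)$ is a nonzero polynomial of degree at least $\deg P_C=k-t$, which contradicts $\deg(P_A-P_B)\le k-t-1$. So adjacent vertices receive distinct colours and $\chi(L_k)\le n^t$.

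The main point needing care is the degree comparison. It works in every field because $\mathbb{F}_q[z]$ is an integral domain: the degree of a product is the sum of the degrees, and $P_C\neq0$. I also note that the argument only needs $n\le q$, so the prime-power hypothesis is just what makes $q=n$ available. The cases $k<t$ and $k>n-t$ are trivial, because then $L_k$ has no edges.
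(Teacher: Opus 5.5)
Your proof is correct. The paper gives no proof of its own here (it only cites Graham--Sloane), and your colouring $A\mapsto (e_1(A),\dots,e_t(A))\in\mathbb{F}_q^{t}$, together with the degree comparison $P_A-P_B=P_C\,(P_X-P_Y)$, is essentially the standard Graham--Sloane argument behind that citation; your remark that only $n\le q$ is needed is exactly what the paper uses in Corollary~\ref{construction_vandermonde} when it passes to a prime $q\in[n,n+n^{c}]$.
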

\medskip

To extend Theorem \ref{thm constant weight code} to every sufficiently large $n$, we will also use the following classical result on prime gaps.

\begin{theorem}[Baker--Harman--Pintz~\cite{baker2001difference}] \label{thm Baker-Harman-Pintz}
    There is a constant $c<1$ (one may take $c=0.525$) such that for all sufficiently large $x$ there is a prime in the interval $[x,x+x^{c}]$.

\end{theorem}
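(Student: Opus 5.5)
This statement is the prime-gap theorem of Baker, Harman and Pintz. It is a deep result in analytic number theory, and the paper quotes it rather than proving it. The honest plan is therefore to take it as a black box, as the paper does, and only sketch how its proof goes. The proof is built on a sieve. One sets up an asymptotic lower bound for
\[
\sum_{x<n\le x+x^{c}} \Lambda(n),
\]
or, more precisely, for a weighted count of primes in the short interval $[x,x+x^{c}]$. The lower bound is obtained by comparing this count with the count of primes in a long interval $[x,2x]$ through Harman's sieve.

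The key steps, in order, are as follows. First, decompose the prime indicator with Buchstab's identity into sums of type I (the sum is over $m\le M$ of $a_m$, each weighted by a count of integers $n$ with $mn$ in the interval) and sums of type II (bilinear sums over $m$ and $n$ of $a_m b_n$ with $mn$ in the interval). Second, prove asymptotic formulas for these sums in the short interval whenever the ranges of $m$ lie in admissible regions. This uses Dirichlet polynomial mean-value estimates, Huxley-type large-value estimates, and Watt's mean-value theorem for $\zeta$ twisted by Dirichlet polynomials; together these yield the short-interval asymptotics for $c$ slightly below $0.55$. Third, handle the remaining terms, which cannot be evaluated asymptotically but have a fixed sign. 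They are discarded, so one gets an inequality rather than an asymptotic. Fourth, bound the total loss from discarding them numerically by Buchstab-function integrals. Provided the loss is less than $1$ (it is roughly $0.99$ at $c=0.525$), one obtains $\pi(x+x^{c})-\pi(x)\gg x^{c}/\log x$, and in particular the interval contains a prime for all large $x$.

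The main obstacle is the third and fourth steps: finding a Buchstab decomposition and the regions of $(m,n)$ for which the bilinear estimates hold, so that the discarded mass stays below $1$ at exponent $0.525$. This is a delicate numerical optimisation, and I would not attempt to reproduce it.

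For the paper's purposes, any exponent $c<1$ suffices, so a weaker statement is enough. Huxley's zero-density estimate $N(\sigma,T)\ll T^{12(1-\sigma)/5+\epsilon}$, combined with the explicit formula, already gives primes in $[x,x+x^{7/12+\epsilon}]$. Even Ingham's bound, giving exponent $5/8+\epsilon$, works. I would cite one of these zero-density results as the accessible route. In later sections the statement will be used only to pass from a prime (power) $p$ with $n\le p\le n+n^{c}$ to arbitrary large $n$, at a cost of a factor $(p/n)^{t}=1+o(1)$.
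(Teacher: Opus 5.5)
Your proposal matches the paper: it also quotes Baker--Harman--Pintz as a black box without proof, and your outline of the Harman-sieve argument (Buchstab decomposition, type I/II asymptotics, discarding terms of fixed sign, Buchstab-integral bookkeeping) is a fair high-level description. Your fallback of citing Huxley ($c=7/12+\epsilon$) or Ingham ($c=5/8+\epsilon$) proves the statement except for the parenthetical value $c=0.525$, which genuinely requires BHP; it is also more than the paper needs, since Corollary~\ref{construction_vandermonde} only uses a prime $q\in[n,(1+o(1))n]$, and that already follows from the prime number theorem.
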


\section{Lower Bounds Constructions: Proof of Theorem~\ref{theorem lower bound s-independence}}\label{sec lower bounds}

% \subsection{Layered Construction}

Recall that $L_k=\{x\in\{0,1\}^n:\ |x|=k\}$ is the subgraph induced by the $k$-th layer of $H_r(n)$.

\begin{corollary}\label{construction_vandermonde}
For every fixed even integer $r=2t$ we have
\[
\chi(L_k)\le(1+o(1))\,\cdot n^{t},
\qquad  \textrm{as } n\to\infty.
\]
\end{corollary}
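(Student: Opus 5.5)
We derive Corollary~\ref{construction_vandermonde} from Theorem~\ref{thm constant weight code}, using Theorem~\ref{thm Baker-Harman-Pintz} to handle lengths $n$ that are not prime powers. The key monotonicity fact is that $L_k$ on $\{0,1\}^n$ embeds as an induced subgraph of the $k$-th layer on $\{0,1\}^{p}$ for any $p\ge n$. The embedding pads each vector with $p-n$ zeros. Padding preserves the weight $k$ and all Hamming distances, so adjacency (distance exactly $2t$) is preserved in both directions. Hence any proper colouring of the $k$-th layer of $H_{2t}(p)$ restricts to a proper colouring of $L_k$ in $H_{2t}(n)$, and $\chi(L_k^{(n)})\le \chi(L_k^{(p)})$.

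Given large $n$, we apply Theorem~\ref{thm Baker-Harman-Pintz} with $x=n$ to obtain a prime $p\in[n,n+n^{c}]$ with $c=0.525<1$. Since $p$ is a prime, hence a prime power, and $k\in[n]\subseteq[p]$, Theorem~\ref{thm constant weight code} gives $\chi(L_k^{(p)})\le p^{t}$. Combining,
\[
\chi(L_k)\le p^t\le (n+n^{c})^t = n^t\bigl(1+n^{c-1}\bigr)^t=(1+o(1))\,n^t,
\]
because $t$ is fixed and $n^{c-1}\to 0$. The $o(1)$ term is uniform in $k$.

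The only step needing care is the hypothesis check in Theorem~\ref{thm constant weight code}. We must make sure its statement concerns the layer graph with respect to distance $2t$, and that it applies for every $k\le p$, including $k$ close to $p$; padding keeps $k\le n\le p$, so this holds. We must also confirm that the Graham--Sloane colouring is a proper colouring of the distance-$2t$ graph, meaning each colour class has no two words at distance exactly $2t$. Their codes have minimum distance at least $2t+2$, which is stronger than needed, so this is fine. The prime-gap input is used only to keep the blow-up factor $(p/n)^t$ equal to $1+o(1)$. Any gap bound of the form $o(n)$ would suffice, so even Bertrand's postulate would give only a constant $2^t$, while Baker--Harman--Pintz gives the sharp $1+o(1)$.
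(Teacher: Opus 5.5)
Your proposal is correct and follows the paper's proof: you pick a prime $q\in[n,n+n^{c}]$ via Baker--Harman--Pintz, bound the layer chromatic number in $H_{2t}(n)$ by that in $H_{2t}(q)$, and apply Graham--Sloane to get $q^{t}=(1+o(1))\,n^{t}$. Your zero-padding embedding just spells out the monotonicity step that the paper leaves implicit in its inequality $\chi(H_{2t}(n)[L_k])\le\chi(H_{2t}(q)[L_k])$.
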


\begin{proof}
By Theorem~\ref{thm Baker-Harman-Pintz}, there exists a prime $q \in[n, n+n^c]$ for some $c < 1$. 
Applying Theorem~\ref{thm constant weight code} with this choice of $q$ yields
$
\chi(H_{2t}(n)[L_k])\leq \chi(H_{2t}(q)[L_k]) \le q^{t} = (1+o(1))\,n^{t}.$
\end{proof}

% \begin{proof}
%    This is a direct application of Theorem~\ref{thm constant weight code} and Theorem \ref{thm Baker-Harman-Pintz}. 
% \end{proof}

\begin{lemma}\label{lemma layer to the cube coloring}
For every fixed even integer $r=2t$ we have
    \[
    \chi(H_{2t}(n)) \leq (t+1)\cdot  \max_k \chi(L_k).
    \]
\end{lemma}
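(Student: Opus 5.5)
Vertices in different layers $L_j$ and $L_{j'}$ can be at Hamming distance exactly $2t$ only if $|j-j'|\le 2t$ and $j\equiv j' \pmod 2$. The plan is to group layers by their weight modulo $2(t+1)$, so that two distinct layers in the same group always have weights differing by at least $2t+2$. Then one color palette of size $\max_k \chi(L_k)$ can be reused for all layers in a group, and $t+1$ groups suffice once parity is taken into account.

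First note that $d_H(x,y)=2t$ forces $|x|\equiv |y| \pmod 2$, because $|x|+|y|-d_H(x,y)=2|x\wedge y|$ is even. It also forces $\bigl||x|-|y|\bigr|\le d_H(x,y)=2t$. Now define a class function on weights by
\[
c(j)=\lfloor (j \bmod 2(t+1))/2 \rfloor \in \{0,1,\dots,t\}.
\]
Equivalently, write $j=2(t+1)a+2b+\epsilon$ with $\epsilon\in\{0,1\}$ and $0\le b\le t$, and set $c(j)=b$. This is the step that needs care. Suppose $x,y$ are adjacent with $|x|=j$, $|y|=j'$, $j\neq j'$ and $c(j)=c(j')$. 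Parity gives $\epsilon=\epsilon'$. Then $j-j'=2(t+1)(a-a')$, which is a nonzero multiple of $2t+2$, so $|j-j'|\ge 2t+2>2t$. This is impossible. Hence edges of $H_{2t}(n)$ join only vertices in the same layer, or vertices in layers of different classes.

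Let $m=\max_k \chi(L_k)$ and fix a proper coloring $\phi_k: L_k\to [m]$ of each layer. Color $x$ with $|x|=j$ by the pair $(c(j),\phi_j(x))\in\{0,\dots,t\}\times[m]$. This uses $(t+1)m$ colors. Take an edge $xy$. If $c(|x|)\neq c(|y|)$, the first coordinates differ. Otherwise, by the previous step, $|x|=|y|$, so $x$ and $y$ lie in the same layer and $\phi_{|x|}(x)\neq\phi_{|x|}(y)$. So the coloring is proper, which gives $\chi(H_{2t}(n))\le (t+1)\max_k\chi(L_k)$.

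The only real obstacle is choosing the grouping so that the parity constraint and the distance constraint together cut the count from the naive $2t+1$ down to $t+1$. That is exactly why the modulus is $2(t+1)$ and the class is obtained by halving the residue.
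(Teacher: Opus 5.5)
Your proof is correct and is essentially the paper's argument. Your classes $c(j)=b$ are exactly the paper's classes $S_i$, namely the weights congruent to $2i$ or $2i+1$ modulo $2t+2$. The paper likewise rules out edges between distinct layers of the same class by parity (when the weights differ by $1$) and by $\bigl||x|-|y|\bigr|\le 2t$ (when they differ by at least $2t+1$), then reuses one palette per class. You just make these two facts explicit, and your grouping also covers the layer $k=0$, which the paper's partition of $[n]$ glosses over.
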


\begin{proof}
    It suffices to partition $[n]$ into $t+1$ classes $S_1, \ldots, S_{t+1}$ such that for every $i\neq j$ from the same class, there is no edge between $L_i$ and $L_j$. 
    Define $S_i = \bigcup_{k \equiv 2i \pmod{2t+2}} \{k, k+1 \}$. Clearly, $S_1, \ldots, S_{t+1}$ partitions $[n]$. Additionally, for every $i\neq j$ from the same class, we either have $|i - j| = 1$ or $|i - j| \geq 2t+1$. In both cases, there is no edge between layers $L_i$ and $L_j$. Thus, this partition naturally induces a proper coloring of $H_{2t}(n)$ from proper colorings of $L_k$ for every~$k$. 
\end{proof}

% \subsection{$s$-independent sets}
%We introduce a method to obtain a $K_s$-free set from an independent set in the Hamming-like graph. 
\begin{lemma} \label{lemma boosting to s-1 parts}
Let $G$ be a graph on $V=\{0,1\}^n$ such that adjacency depends on the Hamming distance, i.e., $x y \in E(G)$ if and only if $d_H(x, y) \in D$ for some $D \subseteq\{1, \ldots, n\}$.
Let $I \subseteq V$ be an independent set in $G$. For every integer $s \geq 3$,
\[
\alpha_s(G) \geq (s-1) \cdot |I| \cdot \left(1 - \frac{(s-2)}{2}\cdot\frac{|I|}{2^n}\right).
\]
In particular, $\alpha_s(G) \geq (s-1 + o(1))\cdot \alpha(G)$ if $\alpha(G) = o(2^n)$.
\end{lemma}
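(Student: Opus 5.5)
The plan is to take $s-1$ random translates of $I$ and use their union as the $K_s$-free set. Fix $I$ independent in $G$ and choose $v_1,\dots,v_{s-1}\in\{0,1\}^n$ independently and uniformly at random. Set $I_j = I+v_j$, with addition in $\mathbb{F}_2^n$. Since translation preserves Hamming distance, and adjacency depends only on $d_H$, each $I_j$ is again independent in $G$. Hence $U=\bigcup_{j=1}^{s-1} I_j$ is a union of $s-1$ independent sets. Any $s$ vertices of $U$ have two of them in the same $I_j$ by pigeonhole, and those two are non-adjacent. So $U$ spans no $K_s$, and $\alpha_s(G)\ge |U|$ for every choice of the $v_j$.

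The remaining task is to show that $\mathbb{E}|U|$ is large; some choice then attains at least the expectation. By Bonferroni, i.e.\ inclusion–exclusion truncated at the second term,
\[
|U|\ \ge\ \sum_{j}|I_j| - \sum_{j<j'}|I_j\cap I_{j'}| .
\]
The first sum is $(s-1)|I|$. For a fixed pair $j<j'$, the difference $v_j+v_{j'}$ is uniform on $\{0,1\}^n$. We then compute
\[
\mathbb{E}|I_j\cap I_{j'}| = \sum_{x,y\in I}\Pr[x+v_j=y+v_{j'}] = \frac{|I|^2}{2^n},
\]
because for each pair $(x,y)$ exactly one value of $v_j+v_{j'}$ works. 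Summing over the $\binom{s-1}{2}$ pairs gives
\[
\mathbb{E}|U|\ \ge\ (s-1)|I| - \binom{s-1}{2}\frac{|I|^2}{2^n} = (s-1)|I|\left(1-\frac{s-2}{2}\cdot\frac{|I|}{2^n}\right),
\]
which is exactly the claimed bound.

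For the ``in particular'' statement, take $I$ to be a maximum independent set, so $|I|=\alpha(G)$. If $\alpha(G)=o(2^n)$, the correction factor is $1-o(1)$ because $s$ is fixed, and this yields $\alpha_s(G)\ge (s-1+o(1))\alpha(G)$.

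No step is a real obstacle. The only points needing care are the uniformity of $v_j+v_{j'}$, which gives the exact expected overlap, and the observation that the pigeonhole argument needs only that each $I_j$ is independent. The $I_j$ may overlap, and overlaps only reduce $|U|$, which the second-order Bonferroni term already accounts for. The bound holds for $s=2$ as well, trivially, but the statement only requires $s\ge3$.
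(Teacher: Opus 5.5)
Your proposal is correct and follows the paper's proof: you take a union of $s-1$ translates of $I$ and apply second-order Bonferroni, with expected pairwise overlap $|I|^2/2^n$. The only cosmetic difference is that you randomize all $s-1$ shifts, whereas the paper fixes $u_0=0$ and randomizes $s-2$ of them; this does not change the computation.
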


\begin{proof}
Let $u_1,\ldots,u_{s-2}$ be chosen independently and uniformly at random from
$\{0,1\}^n$, and define
\[
S = I \cup (I+u_1) \cup \cdots \cup (I+u_{s-2}).
\]

We first show that $S$ is a $K_s$-free set. 
Since adjacency in $G$ depends only on the Hamming distance, the graph is invariant
under translations of $(\mathbb{Z}/2\mathbb{Z})^n$. Hence, each translate $I+u_k$ is an independent set. The set $S$ is a union of $s-1$ independent
sets, implying that $S$ contains no $K_s$.

We next apply the first moment method
to show that there is a choice of $\{u_1, \ldots, u_{s-2}\}$ such that $S$ has the desired size. 
Set $u_0=0$ so that $I+u_0=I$. By the inclusion--exclusion principle,
\[
|S|
\ge \sum_{k=0}^{s-2}|I+u_k|
   - \sum_{0\le i<j\le s-2}|(I+u_i)\cap(I+u_j)|.
\]
Since $|I+u_k|=|I|$ for every $k$,
\[
\mathbb{E}[|S|]
\ge (s-1)|I|
   - \sum_{0\le i<j\le s-2}\mathbb{E}\bigl[|(I+u_i)\cap(I+u_j)|\bigr].
\]

For fixed $i<j$, we have
$|(I+u_i)\cap(I+u_j)|
  = \bigl|\{(x,y)\in I^2 : x+u_i=y+u_j\}\bigr|.$
The condition $x+u_i=y+u_j$ holds if and only if $u_i+u_j=x+y$.  Since $u_i$ and $u_j$ are chosen
independently and uniformly distributed, $u_i+u_j$ is uniformly distributed on $\{0,1\}^n$, and therefore
$\Pr(u_i+u_j=x+y)=2^{-n}$ for every
$(x,y)\in I^2.$
Summing over all pairs $(x,y)$ gives
\[
\mathbb{E}\bigl[|(I+u_i)\cap(I+u_j)|\bigr]=\frac{|I|^2}{2^n}.
\]
Hence,
\[
\mathbb{E}[|S|]
\ge (s-1)|I|
   - \binom{s-1}{2}\frac{|I|^2}{2^n}
= (s-1)|I|\left(1-\frac{s-2}{2}\cdot\frac{|I|}{2^n}\right).
\]
In particular, there exists a choice of $\{u_1, \ldots, u_{s-2}\}$ for which $|S|$
achieves at least this value. Since $S$ is $K_s$-free, the claimed bound on
$\alpha_s(G)$ follows.
\end{proof}

\begin{proof}[Proof of Theorem~\ref{theorem lower bound s-independence}]
    (i) By Corollary~\ref{construction_vandermonde} and Lemma~\ref{lemma layer to the cube coloring} , there exists a proper coloring of $H_{2t}(n)$ with $(t+1 +o(1)) n^{t}$ color classes. Hence, one can take the largest $s-1$ color classes, whose union is $K_s$-free. \\
(ii) We have two proofs for this.
The first proof treats the BCH code construction as a black box. By the standard BCH code construction (see, e.g.,~\cite{MacWilliamsSloane1977}), there exists an independent set in $H_{2t}(n)$ with size $(1 + o(1))2^n/n^{t}$. Then, we can apply Lemma~\ref{lemma boosting to s-1 parts} to find a $K_s$-free set with size $(s-1 +o(1))2^n/n^{t}$.

For the second proof, we refer the reader to the Appendix, where we give a construction with additional structures. 
By Proposition~\ref{prop:chi_general_ceiling}, there is a proper coloring of $H_r(n)$ with $(1+ o(1)) n^{t}$ color classes. Hence, one can simply take the largest $s-1$ color classes to obtain a $K_s$-free set.
\end{proof}

\section{Proofs of Theorems~\ref{thm alpha hrn asymptotic}, ~\ref{thm:asymp_subsequence r=2} and~\ref{thm prime power}}\label{sec: proof of thm 1.4 and 1.5}

\begin{proof}[Proof of Theorem~\ref{thm alpha hrn asymptotic}]
    
We prove that for each fixed even $r=2t$, 
    \[
    \alpha_s(H_{2t}(n)) = \Theta\left(\frac{2^n}{n^{t}}\right). 
    \]
We first prove the upper bound.
Note that a sunflower with $s$ petals is a special type of a $K_s$ in $H_{2t}(n)$; hence an upper bound on the Tur\'an number of the sunflower implies an upper bound on $\alpha_s$.

For $k = 2t-1$, by Lemma~\ref{transfer lemma}, we have 

\begin{equation}\label{eq: upper bound}
    \alpha_s(H_{2t}(n)) \leq \frac{2^n}{\binom{n}{2t-1}} \cdot m_s(n,\, 2t-1,\, t-1).
\end{equation}
By Lemma~\ref{lemma trivial ms to turan}, and Theorem~\ref{thm frankl turan number of star} applied with 
intersection parameter $\ell = t- 1$ and uniformity $2t - 1$,
\[
m_s(n,\, 2t-1,\, t-1) 
\;\leq\; \mathrm{ex}\!\left(n,\, \mathcal{S}_{t-1}^{(2t-1)}(s)\right) 
\;=\; O\!\left(n^{t-1}\right).
\]
Since $\binom{n}{2t-1} = \Theta(n^{2t-1})$, combining these bounds gives
\[
\alpha_s(H_{2t}(n)) \;\leq\; 2^n \cdot \frac{O(n^{t-1})}{\Theta(n^{2t-1})} \;=\; O\!\left(\frac{2^n}{n^{t}}\right).
\]

The lower bound follows from part~(i) of Theorem~\ref{theorem lower bound s-independence}. 
\end{proof}

%\begin{lemma}\label{lemma alpha_s_upper_bound}
%    For every fixed $s\geq 2$, we have $\alpha_s(H_2(n)) \leq \frac{s-1}{n} \cdot 2^n. $
%\end{lemma}
%\begin{proof}
%Note that $m_s(n,1,0) = s-1$. Applying Lemma~\ref{transfer lemma} with $k = 1$, we get 
%\[
%\alpha_s(H_2(n)) \leq m_s(n,1,0) \cdot \frac{2^n}{n} = \frac{s-1}{n} \cdot 2^n,
%\]
%as desired. 
%\end{proof}

\begin{proof}[Proof of Theorem~\ref{thm:asymp_subsequence r=2}]
    We prove that \[
\limsup_{n\to\infty} \frac{\alpha_s(H_2(n))}{2^n/n} = s-1.
\]
For the upper bound, note that $m_s(n,1,0) = s-1$. Taking $t=1$ in~\eqref{eq: upper bound}, we get 
\[
\alpha_s(H_2(n)) \leq m_s(n,1,0) \cdot \frac{2^n}{n} = \frac{s-1}{n} \cdot 2^n,
\]
as desired. The lower bound follows from part~(ii) of Theorem~\ref{theorem lower bound s-independence}. 
\end{proof}

\begin{proof}[Proof of Theorem~\ref{thm prime power}]

Setting $k = 2t-1 +i$,  by Lemma~\ref{transfer lemma}, we have
\begin{equation}\label{eq ind1}
    \alpha(H_{2t}(n)) \leq \frac{2^n}{\binom{n}{2t-1+i}}\cdot  m_2(n, 2t-1+i, t- 1 +i).
\end{equation}
Since $t +i$ is a prime power, applying Theorem~\ref{thm frankl wilson q-1} with $q = t+i$, we have 
\begin{equation}\label{eq ind2}
    m_2(n, 2t-1+i, t -1 +i) \leq \binom{n}{t+i-1}.
\end{equation}
Combining (\ref{eq ind1}) and (\ref{eq ind2}) together, we have 
\[
\alpha\left(H_{2t}(n)\right) \leqslant\left(\frac{(2t-1+i)!}{(t-1+i)!}+o(1)\right) \cdot \frac{2^n}{n^{t}} . \qedhere
\]
\end{proof}

\medskip

\noindent
\textbf{Acknowledgments:} We would like to thank Haoran Luo, Dadong Peng, and Zolt\'an F\"uredi for helpful discussions.

% \bibliographystyle{abbrv}
% \bibliography{bib}

\printbibliography

\section{Appendix: BCH Codes}
%\subsection{BCH Codes}\label{sec:BCH-codes}
The description for this BCH code construction is not easy to locate, thus we include a proof here for completeness.
One classical version of the BCH code corresponds to a binary linear code of length $N$ with minimum Hamming distance $2t+1$ and dimension at least $N-t \cdot \lceil \log_2 N\rceil.$
The following  is a slight modification of a classical BCH code construction; see, e.g.,~\cite{MacWilliamsSloane1977}.
We remark that Linial, Meshulam, and Tarsi~\cite{linial1988matroidal} obtained the same statement when $r=2$, and Skupień~\cite{SKUPIEN2007990} proved a closely related result with slightly different parameters but without the additional remark on the sizes of the color classes. 

\begin{proposition}\label{prop:chi_general_ceiling}
For every even integer $r = 2t$ and every $n\in\mathbb{N}$,
\[
\chi\left(H_{2t}(n)\right)\le 2^{t\lceil \log_2(n)\rceil}.
\]
Moreover, if $n$ is a power of $2$ or one less than a power of $2$, then there exists a proper coloring achieving the above bound in which every color class has the same size.
\end{proposition}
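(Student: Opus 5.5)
The plan is to color each vertex by its BCH syndrome. Let $m=\lceil \log_2 n\rceil$ and $N=2^m-1$. Fix a primitive element $\beta$ of $\mathbb{F}_{2^m}$. If $n\le N$, identify the coordinates $[n]$ injectively with distinct nonzero elements $\beta^{j}$. If $n=2^m$, use all of $\mathbb{F}_{2^m}$, zero included; here $m=\log_2 n$ exactly. In either case we get distinct field elements $a_1,\dots,a_n$, with at most one of them equal to $0$. For $x\in\{0,1\}^n$ define
\[
c(x)=\Bigl(\sum_{i} x_i a_i,\ \sum_i x_i a_i^3,\ \dots,\ \sum_i x_i a_i^{2t-1}\Bigr)\in \mathbb{F}_{2^m}^{t}.
\]
This map takes at most $2^{tm}=2^{t\lceil\log_2 n\rceil}$ values. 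It is $\mathbb{F}_2$-linear, so $c(x)=c(y)$ if and only if $c(x+y)=0$.

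Properness amounts to showing that no nonzero $z$ with $|z|=2t$ has $c(z)=0$. I will in fact show that no $z$ with $1\le |z|\le 2t$ lies in the kernel. Write $S=\supp(z)$. In characteristic $2$ we have $\sum_{i\in S}a_i^{2j}=\bigl(\sum_{i\in S} a_i^{j}\bigr)^2$. Hence vanishing of the odd power sums $p_1,p_3,\dots,p_{2t-1}$ forces $p_1,p_2,\dots,p_{2t}$ to vanish.

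I will treat two cases.

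\emph{Case $0\notin\{a_i:i\in S\}$.} The $|S|\times|S|$ system $\sum_{i\in S} a_i^j=0$ for $j=1,\dots,|S|$ has coefficient matrix $\bigl(a_i^j\bigr)$, which is a Vandermonde matrix times $\diag(a_i)$. It is therefore nonsingular, which contradicts the all-ones vector being a solution.

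\emph{Case $0\in\{a_i:i\in S\}$.} Discard the zero element. The remaining $|S|-1\le 2t-1$ nonzero elements satisfy $p_j=0$ for $j=1,\dots,|S|-1$, and the same Vandermonde argument applies unless $|S|-1=0$. In that case $S=\{0\}$, so $z$ has weight $1\ne 2t$ and is irrelevant anyway.

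This step is the heart of the argument. The main subtlety is handling the inclusion of $0$ when $n=2^m$, and the proof goes through by the case split above.

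For the ``moreover'' part, suppose $n=2^m-1$ or $n=2^m$. Then the $a_i$ exhaust $\mathbb{F}_{2^m}^*$ or $\mathbb{F}_{2^m}$, respectively. Since $c$ is linear, all color classes are cosets of $\ker c$ within the image, and these cosets have equal size. It remains to show that $c$ is surjective onto $\mathbb{F}_{2^m}^t$, so that the number of colors equals the stated bound exactly. Equivalently, the $\mathbb{F}_2$-linear functionals $z\mapsto \mathrm{Tr}\bigl(\lambda_j\sum_i z_ia_i^{2j-1}\bigr)$ must be independent.

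Suppose some combination vanishes, that is, $\mathrm{Tr}\bigl(\sum_j\lambda_j a^{2j-1}\bigr)=0$ for every $a$ in the coordinate set. The function $a\mapsto \mathrm{Tr}\bigl(\sum_j \lambda_j a^{2j-1}\bigr)$ is a polynomial of degree at most $2^{m-1}(2t-1)$. A degree count is not quite enough, so instead I reduce modulo $a^{2^m}-a$ and use the fact that the exponents $2^k(2j-1)$ are distinct cyclotomic representatives for $2t-1<2^{m}$. This shows the polynomial is identically zero only when all $\lambda_j=0$.

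When $0$ is included, the coordinate $0$ contributes nothing, so the same argument works. This surjectivity/independence check is the step I expect to need the most care. For $2t-1\ge 2^m$ the bound is trivial anyway, since then $2^{tm}\ge 2^n$ and we may color every vertex differently; equal class sizes also hold trivially in that regime.
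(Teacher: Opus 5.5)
Your coloring, the properness argument (the squaring identity, then the Vandermonde argument split on whether $0$ lies in the support), and the observation that the color classes are cosets of $\ker c$ are exactly the paper's proof. You are more careful than the paper about the weight-one vector on the coordinate labelled $0$. That vector does lie in the kernel, contrary to the paper's intermediate claim that every nonzero $y\in C_0$ has weight at least $2t+1$, but it is harmless since $1\neq 2t$.

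The genuine error is the surjectivity step, which you treat as necessary. It is false, because the exponents $2^k(2j-1)$ need not be distinct modulo $2^m-1$ when $2t-1<2^m$. For even $m$ one has $(2^{m/2}+1)\cdot 2^{m/2}\equiv 2^{m/2}+1 \pmod{2^m-1}$. Hence $a^{2^{m/2}+1}$ lies in the subfield $\mathbb{F}_{2^{m/2}}$ for every $a\in\mathbb{F}_{2^m}$, and that syndrome coordinate takes at most $2^{m/2}$ values. Already for $m=4$, $t=3$, $n=15$ the map $c$ has rank $10$ rather than $12$; this is the classical $[15,5,7]$ BCH code. For $m=3$, the congruence $3\cdot 4\equiv 5\pmod 7$ gives $a^5=(a^3)^4$ on $\mathbb{F}_8$, so for $t=3$ the third coordinate is a function of the second. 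The trace/cyclotomic argument therefore cannot work in the range you claim.

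Fortunately the step is not needed. The paper never claims $c$ is onto. It proves only that the fiber coloring uses at most $2^{tm}$ colors and that its nonempty classes, being cosets of the kernel, all have the same size. Your sentence just before the surjectivity paragraph already establishes this, and by linearity it even holds for every $n$.

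The stronger reading you adopted, exactly $2^{tm}$ nonempty classes of equal size, is false in general. For $m=t=3$ and $n\in\{7,8\}$ one has $2^{tm}=2^9>2^n$, even though $2t-1<2^m$. Your ``trivial regime'' (one color per vertex, i.e.\ $2^n$ colors) does not fit that reading either. It is also superfluous, since the Vandermonde argument works for every $t$.

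Even when $tm\le n$ and one insists on exactly $2^{tm}$ classes, surjectivity is unnecessary. Split each coset of $\ker c$ into cosets of a subspace of $\ker c$ of codimension $tm-\operatorname{rank}c$; subsets of independent sets are independent. Deleting the surjectivity paragraph leaves a complete proof.
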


\begin{proof}
Set
\[
m:=\lceil \log_2(n)\rceil,\qquad N:=2^m,\qquad k:=N-n.
\]
Let $\F:=\F_{2^m}$ be the field of size $2^m=N$, and fix a labeling
\[
\F=\{\gamma_1,\gamma_2,\dots,\gamma_N\}.
\]
We view the $N$-dimensional cube $Q_N=\{0,1\}^N$ as having coordinates
indexed by $\gamma_1,\dots,\gamma_N$.

For $x=(x_1,\dots,x_N)\in\{0,1\}^N$ and an integer $j\ge 1$, define the power sum
\[
S_j(x)\ :=\ \sum_{i=1}^N x_i\,\gamma_i^{\,j}\ \in\ \F.
\]
Define an $\F_2$-linear map $\Phi:\F_2^N\to \F^t$ by
\[
\Phi(x)\ :=\ \bigl(S_1(x),S_3(x),\dots,S_{2t-1}(x)\bigr).
\]
For each $b\in\F^t$, let
\[
C_b\ :=\ \{x\in\F_2^N:\ \Phi(x)=b\}.
\]
These fibers partition $\F_2^N$ into exactly $|\F^t|=N^t$ parts.

\begin{claim}\label{clm: apdx} Every fiber is an independent set in $H_r(N)$.
\end{claim}

\begin{proof}
Take distinct $x,x'\in C_b$ and set $y:=x\oplus x'\in\F_2^N$.
By $\F_2$-linearity, $\Phi(y)=\Phi(x)-\Phi(x')=0$, hence $y\in C_0$ and $y\neq 0$.

In characteristic $2$ we have for every $j\ge 1$,
\begin{equation}\label{eq:doubling_identity_full}
S_{2j}(y)\ =\ S_j(y)^2,
\end{equation}
as $(u+v)^2=u^2+v^2$ and $y_i^2=y_i$ for $y_i\in\{0,1\}$.
Since $y\in C_0$, we have $S_{2\ell-1}(y)=0$ for every $\ell\in[t]$. Iterating~\eqref{eq:doubling_identity_full} shows
\begin{equation}\label{eq:all_moments_full}
S_\ell(y)=0\qquad\text{for every }\ell\in [2t].
\end{equation}

Denote by $\wt(y)$ the number of $1$'s in $y$. We now show that $\wt(y)\ge 2t+1$. Suppose for a contradiction that $w:=\wt(y)\le 2t$.
Let $\supp(y)=\{i_1,\dots,i_w\}$ and $\beta_s:=\gamma_{i_s}$, which are pairwise distinct.

\medskip
\noindent\emph{Case 1: None of the $\beta_s$'s is $0$.}
Then,
\[
\sum_{s=1}^w \beta_s^{\,\ell}=0
\]
for every $\ell\in[w]$ by~\eqref{eq:all_moments_full}.
Define the $w\times w$ matrix $M=\bigl[\beta_s^{\,\ell}\bigr]_{\ell}^{s}$, then $M\mathbf{1}=\mathbf{0}$, where $\mathbf{1}=(1,\dots,1)^T\in\F^w$.
Factoring $\beta_s$ out of column $s$, we obtain
\[
M=\diag(\beta_1,\dots,\beta_w)\cdot
\begin{pmatrix}
1&1&\cdots&1\\
\beta_1&\beta_2&\cdots&\beta_w\\
\vdots&\vdots&\ddots&\vdots\\
\beta_1^{w-1}&\beta_2^{w-1}&\cdots&\beta_w^{w-1}
\end{pmatrix}.
\]
The second factor is the Vandermonde matrix, whose determinant is
$\prod_{1\le i<j\le w}(\beta_j-\beta_i)$, which is nonzero because the $\beta_s$'s are distinct.
Additionally, we have $\prod_{s=1}^w\beta_s\neq 0$ since $\beta_s\in\F^\times$ for every~$s$.
Hence, $\det(M)\neq 0$, implying that $M$ is invertible. This contradicts $M\mathbf{1}=\mathbf{0}$ with $\mathbf{1}\neq \mathbf{0}$.
% Thus this case cannot occur.

\medskip
\noindent\emph{Case 2: One of the $\beta_s$'s equals $0$.}
Since the $\beta_s$'s are distinct, exactly one of them is $0$. Relabel them so that $\beta_w=0$.
For every $\ell\ge 1$ we have $\beta_w^{\,\ell}=0$, so~\eqref{eq:all_moments_full} implies
\[
\sum_{s=1}^{w-1} \beta_s^{\,\ell}=0\qquad \text{for every }\ell\in[w-1].
\]
Since $\beta_1,\dots,\beta_{w-1}$ are distinct and nonzero, applying the same Vandermonde argument to the
$(w-1)\times(w-1)$ matrix $[\beta_s^{\,\ell}]_{\ell}^{s}$ yields a contradiction.
% Thus this case also cannot occur.

In either case we contradict $w\le 2t$, so $\wt(y)\ge 2t+1$.
Hence, $d(x,x')=\wt(x\oplus x')=\wt(y)\ge 2t+1$, implying that $x$ and $x'$ are non-adjacent in $H_r(N)$.
This proves Claim~\ref{clm: apdx}.
\end{proof}

By Claim~\ref{clm: apdx}, $\{C_b\}_{b\in\F^t}$ gives a proper coloring of $H_r(N)$ using at most $N^t$ colors. Identify $\{0,1\}^n$ with the induced subcube
$
U\ :=\ \{(u,0^k)\in\{0,1\}^{n}\times\{0,1\}^{k}\}\ \subseteq\ \{0,1\}^{N}$.
The induced subgraph of $H_r(N)$ on $U$ is isomorphic to $H_r(n)$.
Restricting the above coloring to $U$ yields a proper coloring of $H_r(n)$ with at most $N^t$ colors. Hence,
\[
\chi\!\bigl(H_r(n)\bigr)\ \le\ N^t\ =\ 
% (2^m)^t\ 
% %\ =\ 2^{t\lceil \log_2(n)\rceil}
% \ =\ 
2^{\lceil \log_2(n)\rceil\cdot t},
\]
proving the first part of Proposition~\ref{prop:chi_general_ceiling}.

Now we prove the remark about the size of color classes. Assume $n=2^m$, then $N=n$ and the coloring is defined on $\F_2^n$ by the fibers of the linear map
$\Phi:\F_2^n\to\F^t$. Since $\Phi$ is $\F_2$-linear, every nonempty fiber is an affine translate of $\ker(\Phi)$, thus has exactly the same size. Therefore, the color classes are equal.

For $n=2^m-1$, set $N:=n+1=2^m$ and assume $\gamma_1=0$ without loss of generality.
Let $U:=\{x\in\{0,1\}^N:\ x_1=0\}\cong \{0,1\}^n.$
Because $\gamma_1=0$, the coordinate $x_1$ contributes nothing to $S_j(x)$ for every $j\ge 1$, so $\Phi(x)$ does not depend on $x_1$.
Consequently, for every nonempty fiber $C_b$ we have
$|C_b\cap U|=|C_b|/2$,
thus restricting the fiber coloring to $U$ shrinks every color class by the same factor $2$.
In particular the resulting color classes of $H_r(n)$ have the same size.
\end{proof}

\end{document}